%
%
%
%
%
\RequirePackage{fix-cm}
\documentclass[smallextended]{svjour3}       
\smartqed  
\usepackage{amssymb, amsmath, latexsym}
\usepackage[colorlinks=true,linkcolor=blue,citecolor=blue]{hyperref}
\usepackage{amsmath, amscd, amsfonts, amssymb, graphicx, color}
\usepackage{epsfig, epstopdf}
\usepackage{cite}
\usepackage{float}
%
%
%
%
%

\begin{document}

\title{ Two generalized strong convergence algorithms for the variational
inequality problems in Banach spaces
}

\titlerunning{Two generalized strong convergence algorithms}        

\author{Mostafa Ghadampour$^{1}$          
    \and
        Ebrahim Soori$^{*,2}$ 
}


\institute{$^*$Corresponding author\\
Mostafa Ghadampour \at
               Department  of Mathematics, Lorestan University, Lorestan, Khoramabad, Iran. \\
              \email{m.ghadampour@gmail.com}           
           \and
           Donal O$'$Regan \at
            School of Mathematics, Statistics, National University of Ireland, Galway, Ireland.\\
           \email{donal.oregan@nuigalway.ie}
           \and
          Ebrahim Soori \at
                Department  of Mathematics, Lorestan University, Lorestan, Khoramabad, Iran.\\
               \email{sori.e@lu.ac.ir}
               \and
              Ravi P. Agarwal\at
                Department of Mathematics Texas A$\&$M University-Kingsville 700 University Blvd., MSC 172 Kingsville, Texas, USA.\\
 \email{agarwal@tamuk.edu}
}

\date{Received: date / Accepted: date}

\maketitle

\begin{abstract}
In this paper, two generalized algorithms for solving the variational inequality problem in Banach spaces are proposed. Then the strong convergence of the sequences generated by these algorithms will be proved under the suitable conditions. Finally, using MATLAB software,
 we provide some numerical examples to illustrate our results.
\keywords{Variational inequality \and Relatively nonexpansive mapping \and Monotone mapping \and Asymptotical fixed point}
\end{abstract}
\section{Introduction}
Let $C$ be a nonempty closed convex subset of a Banach space $E$ with norm $\|.\|$ and let $E^*$ denotes the dual of $E$. The variational
inequality problem (VIP) is to find a point $x\in C$ such that
       \begin{equation}\label{vip}
         \langle Ax, y - x\rangle \geq 0 \;\;\;\forall\;\; y \in C,
       \end{equation}
where $A$ is a mapping of $C$ into $E^*$ and $\langle.,.\rangle$   denotes the pairing between $E$ and $E^*$.
The solutions set of \eqref{vip} is denoted by $VI(A,C)$.
\\
It is well known that variational inequalities cover a variety of fields in optimal control, optimization, mathematical programming, operational research,
partial differential equations, engineering, and equilibrium models and hence, it have been studied by many authors in the recent years\cite{jlotaa, tdvh, vpts, cgga}.

The operator $A$ of $C$ to $E^*$ is said to be\\
(i) monotone if
    \begin{equation*}
      \langle x - y, Ax - Ay\rangle \geq 0, \;\; \forall x, y \in C;
    \end{equation*}
(ii) $\alpha-$inverse strongly monotone if there exists a constant $\alpha > 0$ such that
     \begin{equation*}
       \langle x - y, Ax - Ay\rangle\geq \alpha\|Ax -Ay\|^2 \;\; \forall x, y \in C;
     \end{equation*}
(iii) $L$-Lipchitz continuous if there exists $L>0$ such that
     \begin{align*}
        \| Ax-Ay\|\leq L\| x-y\|,\;\;\forall x,y\in C.
     \end{align*}

Let $f :C \times C\rightarrow \mathbb{R}$ be a bifunction. The equilibrium problem (GEP) is as follows:  Find $x \in  C $ such that
\begin{equation}\label{GEP}
      f(x, y) + \langle Ax, y - x\rangle \geq 0, \;\; \forall\; y\in C.
  \end{equation}
The set of solutions of \eqref{GEP} is denoted by $GEP(f, A)$. Clearly, the problem \eqref{GEP} is equivalent to (VIP) if $f \equiv 0$.

Korpelevich\cite{ko} proposed the following algorithm for solving the problem (VIP) that is known as extragradient method as \eqref{kor}. Let $x_1$ be an arbitrarily element in $H$:
    \begin{equation}\label{kor}
             \begin{cases}
                y_n=P_C(x_n-\lambda Ax_n), \\
                x_{n+1}=P_C(x_n-\lambda Ay_n),
             \end{cases}
          \end{equation}

Tseng \cite{tp} proposed the following algorithm which was introduced using the modified front-to-back (F-B) method.
          \begin{equation}\label{algo1}
            \begin{cases}
              y_n=P_C(x_n-\lambda Ax_n),  \\
              x_{n+1}=P_X(y_n-\lambda(Ay_n-Ax_n)),
           \end{cases}
         \end{equation}
where $X = C$ and $X=H$ if $A$ is Lipschitz continuous. Thong et al \cite{th} proposed the following convergent algorithm based on the Tseng algorithm.
          \begin{equation}\label{algo}
              \begin{cases}
                y_n=P_C(x_n-\lambda_n Ax_n), \\
                z_n= y_n - \lambda_n(Ay_n -Ax_n),\\
                x_{n+1}=\alpha_nf(x_n)+(1-\alpha_n) z_n,
              \end{cases}
           \end{equation}
where the operator $A$ is monotone and Lipschitz continuous, $\gamma > 0, \; l\in (0,1),  \;\mu \in (0, 1)$ and $\lambda_n$ is chosen to be the largest $\lambda
\in \{ \gamma, \gamma l, \gamma l^2, ...\}$ satisfying
           \begin{align}\label{1}
             \lambda\| Ax_n - Ay_n \| \leq \mu \| x_n - y_n \|.
           \end{align}
In this paper, we present our algorithms in Banach spaces motivated by the Thong algorithm and prove the strong convergence of the sequences generated by these algorithms.
 Finally, using MATLAB software, we provide some numerical examples to illustrate our claims.
\section{Preliminaries}
\label{sec:1}
Let $E$ be a real Banach space with norm $\| .\|$ and let $E^*$ be the dual space of $E$.
 The strong convergence and the weak convergence  of the sequence $\{x_n\}$ to $x$ in $E$ are denoted by $x_n\rightarrow x$ and $x_n\rightharpoonup x$ through in the paper, respectively. The modulus $\delta$ of convexity of $E$ is defined by
$$\delta(\epsilon)=\inf\{1-\frac{\| x+y\|}{2}:\| x\|\leq1,\| y\|\leq1,\| x-y\|\geq\epsilon\}$$
for every $\epsilon \in [0, 2]$. $A$ Banach space $E$ is said to be uniformly convex if $\delta(0)=0$ and $\delta(\epsilon)>0$ for every $\epsilon> 0 $. It is
known that a Banach space $E$ is uniformly convex if and only if for any two sequences $\{x_n\}$ and $\{y_n\}$ in $E$ such that
   \begin{equation*}
     \lim_{n\rightarrow\infty}\| x_n\|=\lim_{n\rightarrow \infty}\| y_n\|=1\; and \;\lim_{n\rightarrow \infty}\| x_n + y_n\|=2,
   \end{equation*}
$\lim_{n\rightarrow \infty}\| x_n - y_n \|=0$ holds. Suppose that $p$ is a fixed real number with $p\geq 2$. A Banach space $E$ is said to be $p$-uniformly
convex\cite{tyhk}, if there exists a constant $c>0$ such that $\delta\geq c\epsilon^p$ for all $\epsilon\in [0,2]$. It is also known that a uniformly convex Banach
space has the Kadec-Klee property, that is, $x_n\rightharpoonup u$ and $\| x_n \|\rightarrow\| u\|$ imply that $x_n \rightarrow u$(see \cite{cii, res}).
\\
 The normalized duality mapping $J: E \rightarrow E^*$ is defined by
     \begin{align*}
        J(x)=\{f \in E^*: \langle x, f \rangle= \|x\|^2=\|f\|^2 \},
     \end{align*}
for each $x \in E$.  Suppose that   $S(E) = \{x \in E : \|x\| = 1\}$.   A Banach space $E$ is called  smooth if for all $x \in S(E)$,
there exists a unique functional $j_x \in E^*$ such that $\langle x, j_x\rangle = \|x\|$ and $\|j_x\| = 1$( see \cite{Ag}).
\\

The norm of $E$ is said to be $G\hat{a}teaux$
differentiable if for each $x,y\in S(E)$, the limit\\
   \begin{equation}\label{smo}
     \lim_{t\rightarrow 0}\frac{\| x+ty\| - \| x \|}{t}
   \end{equation}
exists. In this case, $E$ is called smooth and $E$ is said to be uniformly smooth if the limit \eqref{smo} is attained uniformly for all $x,y\in S(E)$\cite{tn}.
If a Banach space $E$ is uniformly convex, then $E$ is reflexive and strictly convex, and $E^*$ is uniformly smooth\cite{Ag}. It is well known that if $E$ is a
reflexive, strictly convex and smooth Banach space and $J^*:E^*\rightarrow E$ be the duality mapping on $E^*$, then $J^{-1}=J^*$,
also, if $E$ is a uniformly smooth Banach space, then $J$ is uniformly norm to norm continuous on bounded sets of $E$ and
$J^{-1}=J^*$ is also uniformly norm to norm continuous on bounded sets of $E^*$.
Let $E$ be a smooth Banach space and let $J$ be the duality mapping on $E$. The function $\phi:E\times E\rightarrow \mathbb{R}$ is define by
   \begin{equation}\label{phi}
     \phi(x,y)=\| x\|^2-2\langle x,Jy\rangle+\| y\|^2,\;\;\;\forall x,y\in E.
   \end{equation}
Clearly, from \eqref{phi}, it is concluded that
   \begin{equation}\label{phi1}
     (\| x\|-\| y\|)^2\leq\phi(x,y)\leq(\| x\|+\| y\|)^2.
   \end{equation}
If $E$ is a reflexive, strictly convex and smooth Banach space, then for all $x, y \in E$
   \begin{equation}\label{phi4}
     \phi(x,y)=0  \Leftrightarrow  x=y.
   \end{equation}
Also, It is obvious from the definition of the function $\phi$ that the following conditions hold for all  $x, y, z, w \in E$,

   \begin{equation}\label{phi2}
     \phi(x,y)=\phi(x,z)+\phi(z,y)+2\langle x-z,Jz-Jy\rangle,
   \end{equation}
   \begin{equation}\label{phi3}
     2\langle x-y,Jz-Jw\rangle=\phi(x,w)+\phi(y,z)-\phi(x,z)-\phi(y,w).
   \end{equation}
   \begin{equation}\label{phixy}
        \phi(x, y)=\langle x, Jx- Jy\rangle +\langle y-x, Jy\rangle\leq\| x\|\| Jx-Jy\|+\| y-x\|\| y\|.
   \end{equation}
Now, the function $V : E \times E^*\rightarrow\mathbb{R}$ is defined as follows
$$V(x, x^*)= \| x\|^2-2 \langle x, x^*\rangle+\| x^*\|^2,$$
for all $x \in E$ and $x^*\in E$. Moreover, $V(x, x^*) = \phi(x, J^{-1}x^*)$ for all $x\in E$ and $\in E$.
If $E$ is a reflexive strictly convex and smooth Banach space with $E^*$ as its dual, it is concluded that
   \begin{equation}\label{vq}
     V(x,x^*)+2\langle J^{-1}x^*-x,y^*\rangle\leq V(x,x^*+y^*),
   \end{equation}
for all $x\in E$ and all $x^*,y^*\in E^*$\cite{kt}.

 An operator $A : C\rightarrow E^*$ is hemicontinuous at $x_{0}\in C$, if for any sequence $\{x_n\}$ converging to $x_{0}$
along a line implies that $Tx_n\rightharpoonup Tx_{0}$, i.e., $Tx_n = T(x_{0} + t_nx)\rightharpoonup Tx_{0}$ as $t_n\rightarrow 0$ for all $x\in C$.\\
The generalized projection $\Pi_{C} : E\rightarrow C$ is a mapping that assigns to an arbitrary
point $x\in E$, the minimum point of the functional $\phi(y, x)$; that is, $\Pi_{C}x =x_{0}$, where
$x_{0}$ is the solution of the minimization problem
   \begin{equation}\label{min}
     \phi(x_{0},x)=\min_{y \in C}\phi(y,x).
   \end{equation}
The existence and uniqueness of the operator $\Pi_{C}$ follows from the properties of the
functional $\phi(x, y)$ and strict monotonicity of the mapping $J$\cite{ayl}.
Suppose that $C$ is a nonempty closed convex subset of $E$, and $T$ is a mapping from $C$
into itself. A point $p\in C$ is called an asymptotically fixed point of $T$ if $C$
contains a sequence $\{x_n\}$ which converges weakly to $p$ such that $Tx_n - x_n\rightarrow 0$\cite{Ag}.
The set of asymptotical fixed points of $T$ will be denoted by $\hat{F}(T)$. A mapping
$T$ from $C$ into itself is said to be relatively nonexpansive  if $\hat{F}(T) = F(T)$ and $\phi(p,Tx)\leq \phi(p,x)$ for all $x\in C$ and $p\in F(T)$. The
asymptotic behavior of a relatively nonexpansive mapping was studied in \cite{bdrs, bdrs1, cyrs}.\\
We need the following lemmas for the proof of our main results.
   \begin{lemma}\label{2.1.0}
   (\cite{ktw}) Let $E$ be a smooth and uniformly convex Banach
     space and let $\{x_n\}$ and $\{y_n\}$ be two sequences of $E$. If $\phi(x_n,y_n)\rightarrow 0$ and either $\{x_n\}$ or $\{y_n\}$ is bounded,
     then $x_n- y_n\rightarrow 0$.
   \end{lemma}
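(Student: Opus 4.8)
The plan is to argue by contradiction, reducing everything to the sequential characterization of uniform convexity recalled in the Preliminaries (if $\|u_k\|=\|v_k\|=1$ and $\|u_k+v_k\|\to 2$, then $\|u_k-v_k\|\to 0$). First I would use the left inequality in \eqref{phi1}, namely $(\|x_n\|-\|y_n\|)^2\le\phi(x_n,y_n)$, to deduce from $\phi(x_n,y_n)\to 0$ that $\|x_n\|-\|y_n\|\to 0$. Combined with the hypothesis that one of $\{x_n\}$, $\{y_n\}$ is bounded, this forces both sequences to be bounded. (Smoothness of $E$ is used here only to guarantee that $J$, and hence $\phi$, is single-valued.)

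Now suppose $x_n-y_n\not\to 0$. Then there are $\epsilon>0$ and a subsequence $\{n_k\}$ with $\|x_{n_k}-y_{n_k}\|\ge\epsilon$ for all $k$. Using boundedness and $\|x_{n_k}\|-\|y_{n_k}\|\to 0$, pass to a further subsequence so that $\|x_{n_k}\|\to a$ and $\|y_{n_k}\|\to a$ for some $a\ge 0$. If $a=0$ then $\|x_{n_k}\|\to 0$ and $\|y_{n_k}\|\to 0$, so $\|x_{n_k}-y_{n_k}\|\to 0$, contradicting $\|x_{n_k}-y_{n_k}\|\ge\epsilon$; hence $a>0$.

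From the definition \eqref{phi} of $\phi$, $\|x_{n_k}\|^2\to a^2$, $\|y_{n_k}\|^2\to a^2$ and $\phi(x_{n_k},y_{n_k})\to 0$ we get $\langle x_{n_k},Jy_{n_k}\rangle\to a^2$. Therefore $\langle x_{n_k}+y_{n_k},Jy_{n_k}\rangle=\langle x_{n_k},Jy_{n_k}\rangle+\|y_{n_k}\|^2\to 2a^2$, and since $\langle x_{n_k}+y_{n_k},Jy_{n_k}\rangle\le\|x_{n_k}+y_{n_k}\|\,\|Jy_{n_k}\|$ with $\|Jy_{n_k}\|=\|y_{n_k}\|\to a>0$, we obtain $\liminf_k\|x_{n_k}+y_{n_k}\|\ge 2a$; the triangle inequality gives $\limsup_k\|x_{n_k}+y_{n_k}\|\le 2a$, so $\|x_{n_k}+y_{n_k}\|\to 2a$. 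Put $u_k=x_{n_k}/\|x_{n_k}\|$ and $v_k=y_{n_k}/\|y_{n_k}\|$ (defined for large $k$ since $a>0$); then $\|u_k\|=\|v_k\|=1$, and since $\|u_k+v_k-a^{-1}(x_{n_k}+y_{n_k})\|\le\big|\|x_{n_k}\|^{-1}-a^{-1}\big|\,\|x_{n_k}\|+\big|\|y_{n_k}\|^{-1}-a^{-1}\big|\,\|y_{n_k}\|\to 0$, we get $\|u_k+v_k\|\to 2$. The characterization of uniform convexity then yields $\|u_k-v_k\|\to 0$. Finally, writing $x_{n_k}-y_{n_k}=\|x_{n_k}\|(u_k-v_k)+(\|x_{n_k}\|-\|y_{n_k}\|)v_k$ and using boundedness of $\{\|x_{n_k}\|\}$ and $\|x_{n_k}\|-\|y_{n_k}\|\to 0$, we conclude $x_{n_k}-y_{n_k}\to 0$, contradicting $\|x_{n_k}-y_{n_k}\|\ge\epsilon$. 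Hence $x_n-y_n\to 0$.

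The step that needs the most care is the case $a>0$: one must verify $\|x_{n_k}+y_{n_k}\|\to 2a$ purely from the $\phi$-data, normalize correctly to reach $\|u_k+v_k\|\to 2$, and then transfer $\|u_k-v_k\|\to 0$ back to the original sequences while controlling the perturbations caused by $\|x_{n_k}\|$ and $\|y_{n_k}\|$ not being exactly $a$. An essentially equivalent alternative would be to invoke the existence of a strictly increasing continuous convex function $g$ with $g(0)=0$ and a bound of the form $\phi(x,y)\ge g(\|x-y\|)$ on bounded sets, but the contradiction argument above is cleanest given only what the excerpt already provides.
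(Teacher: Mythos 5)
Your proof is correct: the reduction to boundedness of both sequences via $(\|x_n\|-\|y_n\|)^2\le\phi(x_n,y_n)$, the contradiction setup, the extraction of $\langle x_{n_k},Jy_{n_k}\rangle\to a^2$ from the definition of $\phi$, the verification that $\|x_{n_k}+y_{n_k}\|\to 2a$, the normalization to unit vectors, and the transfer of $\|u_k-v_k\|\to 0$ back to $x_{n_k}-y_{n_k}$ are all carried out carefully and use only facts stated in the Preliminaries (single-valuedness of $J$ from smoothness, $\langle y,Jy\rangle=\|y\|^2$, $\|Jy\|=\|y\|$, and the sequential characterization of uniform convexity). Note, however, that the paper itself gives no proof of this lemma: it is quoted verbatim from the cited reference of Kamimura and Takahashi, so there is no in-paper argument to compare against. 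Your contradiction argument is essentially the original one from that reference, and the alternative you mention at the end is also legitimate within this paper's toolkit: since both sequences lie in some ball $B_r(0)$, Lemma \ref{2.5} gives a continuous strictly increasing $g$ with $g(0)=0$ and $g(\|x_n-y_n\|)\le\phi(x_n,y_n)\to 0$, whence $\|x_n-y_n\|\to 0$ by strict monotonicity of $g$; this is shorter, at the cost of invoking the (deeper) existence of $g$, whereas your route is self-contained given only the sequential characterization of uniform convexity.
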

   \begin{lemma}\label{2.1.1}
(\cite{ayl}) Let $C$ be a nonempty closed convex subset of a
      smooth, strictly convex and reflexive Banach space $E$, let $x\in E$ and let $z\in C$.
      Then\\
\hspace*{3cm}        $z = \Pi_{C}x\Leftrightarrow \langle y - z, Jx - Jz\rangle \leq 0$, for all $y\in C$.

   \end{lemma}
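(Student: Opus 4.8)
\emph{Proof plan.} By definition, $z=\Pi_{C}x$ means precisely that $z$ is the (unique) minimizer over the convex set $C$ of the functional $g(y):=\phi(y,x)=\|y\|^{2}-2\langle y,Jx\rangle+\|x\|^{2}$, the existence and uniqueness having been recorded above. The plan is to recognize the displayed inequality as the first-order optimality condition for this constrained minimization; the tools will be the three-point identity \eqref{phi2}, the nonnegativity $\phi\ge 0$ coming from \eqref{phi1}, and the smoothness of $E$.

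For the implication ($\Leftarrow$), I would argue as follows. Assuming $\langle y-z,Jx-Jz\rangle\le 0$ for all $y\in C$, an instance of \eqref{phi2} reads
\[
\phi(y,x)=\phi(y,z)+\phi(z,x)+2\langle y-z,Jz-Jx\rangle
\]
for every $y\in C$. Since $\phi(y,z)\ge 0$ by \eqref{phi1} and $\langle y-z,Jz-Jx\rangle=-\langle y-z,Jx-Jz\rangle\ge 0$ by hypothesis, this yields $\phi(y,x)\ge\phi(z,x)$ for all $y\in C$, so $z$ minimizes $g$ over $C$; by uniqueness of the minimizer (strict convexity of $E$, cf. \eqref{phi4}), $z=\Pi_{C}x$.

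For the implication ($\Rightarrow$), I would fix $y\in C$, set $z_{t}:=(1-t)z+ty\in C$ for $t\in(0,1]$ (so $z_{t}-z=t(y-z)$), and apply \eqref{phi2} with $z_{t}$ in place of $y$:
\[
\phi(z_{t},x)=\phi(z_{t},z)+\phi(z,x)+2t\langle y-z,Jz-Jx\rangle .
\]
Because $z=\Pi_{C}x$ minimizes $g$, the left side is at least $\phi(z,x)$, whence $2\langle y-z,Jz-Jx\rangle\ge -\phi(z_{t},z)/t$; letting $t\to 0^{+}$ and using $\phi(z_{t},z)/t\to 0$ gives $\langle y-z,Jz-Jx\rangle\ge 0$, that is, $\langle y-z,Jx-Jz\rangle\le 0$, as desired.

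The main (indeed, essentially the only) obstacle I anticipate is justifying $\phi(z_{t},z)/t\to 0$. After cancelling the terms linear in $z_{t}$ one has $\phi(z_{t},z)/t=\bigl(\|z_{t}\|^{2}-\|z\|^{2}\bigr)/t+2\bigl(\|z\|^{2}-\langle y,Jz\rangle\bigr)$, and $\bigl(\|z+t(y-z)\|^{2}-\|z\|^{2}\bigr)/t\to 2\langle y-z,Jz\rangle$ because, $E$ being smooth, $J$ is single-valued and $u\mapsto\tfrac12\|u\|^{2}$ is G\^ateaux differentiable with derivative $J$; the two pieces then cancel to $0$. This is the one place where the smoothness hypothesis is genuinely used — everything else is algebra with $\phi$ together with convexity of $C$.
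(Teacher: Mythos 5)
Your proof is correct. The paper itself gives no proof of this lemma --- it is quoted from Alber \cite{ayl} --- so there is nothing internal to compare against; your argument (characterizing $\Pi_{C}x$ as the minimizer of $\phi(\cdot,x)$ over $C$, using the three-point identity \eqref{phi2} for the sufficiency half, and for necessity moving along the segment $z_{t}=(1-t)z+ty$ and using that $\tfrac12\|\cdot\|^{2}$ is G\^ateaux differentiable with derivative $J$ by smoothness, so that $\phi(z_{t},z)/t\to 0$) is exactly the standard proof found in the cited source and in Kamimura--Takahashi. The only point worth making explicit is in the ($\Leftarrow$) direction: the uniqueness of the minimizer that you invoke is already part of the recorded well-definedness of $\Pi_{C}$, and in fact your own inequality $\phi(y,x)\ge \phi(z,x)+\phi(y,z)$ yields it directly, since a second minimizer $y$ would force $\phi(y,z)=0$ and hence $y=z$ by \eqref{phi4}.
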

   \begin{lemma}\label{2.1.2}
     (\cite{ayl}) Let $C$ be a nonempty closed convex subset of a
      smooth, strictly convex and reflexive Banach space $E$ and let $y\in E$. Then\\
     \hspace*{2cm}  $\phi(x,\Pi_{C}y) +\phi(\Pi_{C}y, y)\leq \phi(x, y),\;\; \forall x \in C$.
   \end{lemma}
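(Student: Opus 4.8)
The plan is to obtain the inequality directly from the three–point identity \eqref{phi2} together with the variational characterization of the generalized projection supplied by Lemma \ref{2.1.1}. The point is that Lemma \ref{2.1.2} is really just the statement that a certain cross term in \eqref{phi2} is nonnegative, and that nonnegativity is exactly what Lemma \ref{2.1.1} gives.

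First I would specialize \eqref{phi2} by taking $z=\Pi_{C}y$. Since $\Pi_{C}y\in C$ and $x\in C$, this substitution is legitimate and yields
\[
\phi(x,y)=\phi(x,\Pi_{C}y)+\phi(\Pi_{C}y,y)+2\langle x-\Pi_{C}y,\,J\Pi_{C}y-Jy\rangle .
\]
Thus the assertion $\phi(x,\Pi_{C}y)+\phi(\Pi_{C}y,y)\leq\phi(x,y)$ is equivalent to showing $\langle x-\Pi_{C}y,\,J\Pi_{C}y-Jy\rangle\geq 0$. Next I would apply Lemma \ref{2.1.1} with $y$ as the point being projected: it says that $z=\Pi_{C}y$ is characterized by $\langle w-\Pi_{C}y,\,Jy-J\Pi_{C}y\rangle\leq 0$ for all $w\in C$. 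Choosing $w=x\in C$ gives $\langle x-\Pi_{C}y,\,Jy-J\Pi_{C}y\rangle\leq 0$, i.e. $\langle x-\Pi_{C}y,\,J\Pi_{C}y-Jy\rangle\geq 0$. Substituting this into the displayed identity and discarding the nonnegative term produces the claim.

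The argument is short, so there is no serious obstacle; the only thing requiring care is the sign bookkeeping when translating Lemma \ref{2.1.1} into the form needed for the cross term, and verifying that the standing hypotheses (smooth, strictly convex, reflexive) are precisely those under which $J$ is single-valued, $\Pi_{C}$ is well defined, and Lemma \ref{2.1.1} is available. Modulo that, the proof is a two-line combination of \eqref{phi2} and Lemma \ref{2.1.1}, and it is essentially Alber's classical inequality for the generalized projection.
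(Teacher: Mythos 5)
Your argument is correct: specializing the identity \eqref{phi2} at $z=\Pi_C y$ and killing the cross term $2\langle x-\Pi_C y, J\Pi_C y-Jy\rangle\geq 0$ via Lemma \ref{2.1.1} (applied to the projection of $y$, with $w=x\in C$) is exactly the standard proof of this inequality. The paper itself gives no proof, citing Alber \cite{ayl}, and your two-line derivation is essentially the argument found there, so there is nothing to add.
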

   \begin{lemma}\label{2.2}
(\cite{bb, xuh}) Let $E$ be a 2-uniformly convex and smooth Banach space. Then, for all $x$, $y\in E$,
      it is concluded that\\
      \hspace*{4cm}$\| x - y\| \leq \frac{2}{c^2}\| Jx-Jy\| $,\\
      where $\frac{1}{c}(0\leq c\leq1) $is the 2-uniformly convex constant of $E$.
   \end{lemma}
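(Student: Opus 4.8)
The plan is to reduce the assertion to a single coercivity (strong-monotonicity type) inequality for the duality mapping $J$ and then close with the Cauchy--Schwarz inequality: essentially all of the geometric content of 2-uniform convexity is packed into that one inequality, and the remainder is a one-line estimate. Observe first that if $x=y$ the claim is trivial, since both sides vanish; so we may assume $x\neq y$.

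The substantive step is the classical estimate recorded in \cite{bb, xuh}: for a 2-uniformly convex and smooth Banach space $E$ whose 2-uniformly convex constant is $\frac1c$ (so $0<c\le 1$),
\[
 \langle x-y,\ Jx-Jy\rangle\ \ge\ \frac{c^{2}}{2}\,\|x-y\|^{2}\qquad (x,y\in E).
\]
Conceptually this says that the convex functional $x\mapsto \tfrac12\|x\|^{2}$, whose subdifferential is the (single-valued, by smoothness) mapping $J$, is strongly convex with modulus $c^{2}$ --- the dual reformulation of $E^{*}$ being 2-uniformly smooth. To prove it from scratch one starts from the defining growth condition $\delta(\epsilon)\ge c\,\epsilon^{2}$ on the modulus of convexity, upgrades it to a quadratic parallelogram-type inequality $\bigl\|\tfrac{u+v}{2}\bigr\|^{2}\le \tfrac12\|u\|^{2}+\tfrac12\|v\|^{2}-\kappa\,\|u-v\|^{2}$ (with $\kappa$ a fixed positive multiple of $c^{2}$), and then converts this into the gradient inequality for $J$ by the usual subdifferential argument: take supporting functionals at the relevant points and add the two symmetric inequalities. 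The one delicate point here is the careful propagation of the constant, which is exactly why the convention ``$\frac1c$ is the 2-uniformly convex constant'' has to be fixed in advance.

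Granting the coercivity inequality, the conclusion is immediate. By the definition of the dual norm (Cauchy--Schwarz for the pairing $\langle\cdot,\cdot\rangle$),
\[
 \langle x-y,\ Jx-Jy\rangle\ \le\ \|x-y\|\,\|Jx-Jy\|,
\]
so combining the two displays gives $\frac{c^{2}}{2}\|x-y\|^{2}\le \|x-y\|\,\|Jx-Jy\|$, and dividing through by $\|x-y\|>0$ yields $\frac{c^{2}}{2}\|x-y\|\le \|Jx-Jy\|$, i.e.\ $\|x-y\|\le \frac{2}{c^{2}}\|Jx-Jy\|$, as claimed.

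I expect the main obstacle to be bookkeeping rather than conceptual: ensuring that the coercivity inequality for $J$ is invoked with exactly the constant $c^{2}/2$ under the paper's normalization of the 2-uniformly convex constant, so that the output constant comes out as $2/c^{2}$, and treating the degenerate case $x=y$ separately. Shrinking $c$ if necessary shows that the requirement $0<c\le 1$ costs nothing.
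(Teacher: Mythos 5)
The paper gives no proof of this lemma at all---it is quoted from \cite{bb, xuh}---and your route (the strong monotonicity estimate $\langle x-y,\,Jx-Jy\rangle \ge \frac{c^{2}}{2}\|x-y\|^{2}$ coming from 2-uniform convexity, followed by Cauchy--Schwarz and division by $\|x-y\|$ in the case $x\neq y$) is precisely the standard derivation contained in those cited sources, so the proposal is correct and essentially the same approach. The only caveats are that the substantive coercivity inequality is itself invoked from the same references rather than proved in full (which matches the paper's own level of detail), and that one needs $c>0$ for the division---the paper's ``$0\le c\le 1$'' should read $0<c\le 1$, as you implicitly assume.
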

   \begin{lemma}\label{2.4}
     (Xu \cite{xuh}). Let $E$ be a uniformly convex Banach space and $r>0$. Then there exists a
      continuous strictly increasing convex function $g : [0, 2r] \rightarrow [0,\infty)$ such that $g(0) = 0$ and
      \begin{align*}
      \| tx + (1 - t)y\|^2\leq t\| x\|^2 + (1 - t)\| y\|^2 - t(1 - t)g(\| x - y\|),
      \end{align*}
for all $x, y \in B_{r}(0) = \{z \in E : \| z\|\leq r\}$ and  $t\in [0, 1]$.
   \end{lemma}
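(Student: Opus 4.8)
\emph{Proof plan.} The idea is to build $g$ in three moves: extract from uniform convexity a crude modulus $g_0$ governing the midpoint case $t=\tfrac12$; pass from $t=\tfrac12$ to arbitrary $t\in[0,1]$ by a concavity argument; and regularize $g_0$ into a continuous, strictly increasing, convex function. For $\epsilon\in[0,2r]$ I would set
\[
g_0(\epsilon)=4\,\inf\Big\{\tfrac12\|x\|^2+\tfrac12\|y\|^2-\big\|\tfrac{x+y}{2}\big\|^2:\ x,y\in B_r(0),\ \|x-y\|\ge\epsilon\Big\}.
\]
Convexity of $\|\cdot\|^2$ gives $g_0\ge0$; taking $x=y$ gives $g_0(0)=0$; taking $x=\tfrac{\epsilon}{2}e,\ y=-\tfrac{\epsilon}{2}e$ with $\|e\|=1$ gives $g_0(\epsilon)\le\epsilon^2$, so $g_0$ is bounded on $[0,2r]$; and $g_0$ is nondecreasing since the admissible set shrinks as $\epsilon$ increases. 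The crux is that $g_0(\epsilon)>0$ whenever $\epsilon>0$, and this is the only place where uniform convexity is used; granting it, the definition of $g_0$ gives at once the midpoint estimate $\big\|\tfrac{x+y}{2}\big\|^2\le\tfrac12\|x\|^2+\tfrac12\|y\|^2-\tfrac14 g_0(\|x-y\|)$ for all $x,y\in B_r(0)$.

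To prove $g_0(\epsilon_0)>0$ I would argue by contradiction: if $g_0(\epsilon_0)=0$, pick $x_n,y_n\in B_r(0)$ with $\|x_n-y_n\|\ge\epsilon_0$ and $\tfrac12\|x_n\|^2+\tfrac12\|y_n\|^2-\|\tfrac{x_n+y_n}{2}\|^2\to0$, and pass to a subsequence with $\|x_n\|\to a$, $\|y_n\|\to b$. Then $\|\tfrac{x_n+y_n}{2}\|^2\to\tfrac{a^2+b^2}{2}$, while $\|\tfrac{x_n+y_n}{2}\|\le\tfrac{\|x_n\|+\|y_n\|}{2}\to\tfrac{a+b}{2}$ forces $\tfrac{a^2+b^2}{2}\le(\tfrac{a+b}{2})^2$, hence $a=b=:c$. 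If $c=0$ then $\|x_n-y_n\|\le\|x_n\|+\|y_n\|\to0$, contradicting $\|x_n-y_n\|\ge\epsilon_0$. If $c>0$, put $u_n=x_n/\|x_n\|$, $v_n=y_n/\|y_n\|$ (defined for large $n$); from $\|\tfrac{x_n+y_n}{2}-c\,\tfrac{u_n+v_n}{2}\|\le\tfrac12\big(|\|x_n\|-c|+|\|y_n\|-c|\big)\to0$ and $\|\tfrac{x_n+y_n}{2}\|\to c$ one gets $\|\tfrac{u_n+v_n}{2}\|\to1$; applying the definition of the modulus of convexity $\delta$ to the unit vectors $u_n,v_n$ forces $\|u_n-v_n\|\to0$; and then $\|x_n-y_n\|\le\|x_n\|\,\|u_n-v_n\|+|\|x_n\|-\|y_n\||\to0$, a contradiction again. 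This proves the claim.

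For the passage to general $t$, fix $x,y\in B_r(0)$ and set $F(t)=t\|x\|^2+(1-t)\|y\|^2-\|tx+(1-t)y\|^2$. As $t\mapsto tx+(1-t)y$ is affine and $\|\cdot\|^2$ is convex, $F$ is concave on $[0,1]$ with $F(0)=F(1)=0$; writing $t$ as a convex combination of $\{0,\tfrac12\}$ when $t\le\tfrac12$ and of $\{\tfrac12,1\}$ when $t\ge\tfrac12$ gives $F(t)\ge2\min\{t,1-t\}F(\tfrac12)\ge2t(1-t)F(\tfrac12)$, which together with the midpoint estimate yields $\|tx+(1-t)y\|^2\le t\|x\|^2+(1-t)\|y\|^2-\tfrac12 t(1-t)g_0(\|x-y\|)$ for all $t\in[0,1]$. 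Finally, define $g(\epsilon)=\tfrac{1}{4r}\int_0^\epsilon g_0(s)\,ds$ on $[0,2r]$: since $g_0$ is bounded and monotone, $g$ is continuous with $g(0)=0$; $g'(\epsilon)=\tfrac{1}{4r}g_0(\epsilon)$ is nonnegative and nondecreasing, so $g$ is convex, and $g'(\epsilon)>0$ for $\epsilon>0$, so $g$ is strictly increasing; and monotonicity of $g_0$ gives $g(\epsilon)\le\tfrac{1}{4r}\epsilon\,g_0(\epsilon)\le\tfrac12 g_0(\epsilon)$, so the previous display yields the asserted inequality with this $g$.

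The only genuinely delicate point is the strict positivity of $g_0$; everything else is soft bookkeeping. Within that step the awkward feature is that $x_n,y_n$ merely lie in the ball $B_r(0)$, so one must renormalize to the unit vectors $u_n,v_n$ before the definition of $\delta$ applies and then check that this does not spoil the convergence of $\|\tfrac{x_n+y_n}{2}\|$ — and it is exactly here that one needs uniform convexity (a modulus $\delta(\epsilon)>0$ depending only on $\epsilon$) rather than mere strict convexity of $E$.
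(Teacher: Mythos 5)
Your proof is correct, and it is worth noting that the paper itself offers no argument for Lemma \ref{2.4}: the statement is simply quoted from Xu \cite{xuh}, so there is no internal proof to compare against. Your route --- defining $g_0(\epsilon)$ as four times the infimum of $\tfrac12\|x\|^2+\tfrac12\|y\|^2-\bigl\|\tfrac{x+y}{2}\bigr\|^2$ over pairs in $B_r(0)$ with $\|x-y\|\ge\epsilon$, proving $g_0(\epsilon)>0$ by the normalization-and-contradiction argument that reduces matters to the modulus of convexity (indeed the only point where uniform convexity is used), passing from the midpoint case to general $t$ via concavity of $F(t)=t\|x\|^2+(1-t)\|y\|^2-\|tx+(1-t)y\|^2$ with $F(0)=F(1)=0$, and finally regularizing by $g(\epsilon)=\tfrac1{4r}\int_0^\epsilon g_0(s)\,ds$ --- is a legitimate, self-contained derivation of Xu's inequality for the exponent $2$, whereas Xu's original paper obtains it (for general powers $p>1$) by a different analysis based on the modulus of convexity with an explicit gauge. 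The individual steps all check: $a=b$ follows from $\tfrac{a^2+b^2}{2}\le\bigl(\tfrac{a+b}{2}\bigr)^2$; the estimate $\bigl\|\tfrac{x_n+y_n}{2}-c\,\tfrac{u_n+v_n}{2}\bigr\|\to0$ is what lets you apply $\delta$ to unit vectors; $F(t)\ge 2\min\{t,1-t\}F(\tfrac12)\ge 2t(1-t)F(\tfrac12)$ since $\max\{t,1-t\}\le 1$; and $g\le\tfrac12 g_0$ on $[0,2r]$ closes the loop. The only cosmetic imprecision is the phrase ``$g'(\epsilon)=\tfrac1{4r}g_0(\epsilon)$'': since $g_0$ is merely nondecreasing it may be discontinuous, so that identity holds only at continuity points of $g_0$; but the integral of a nonnegative nondecreasing function is automatically convex and Lipschitz (as $g_0\le 4r^2$ here), and strict positivity of $g_0$ on $(0,2r]$ gives strict monotonicity of $g$, so the conclusion is unaffected.
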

   \begin{lemma}\label{2.5}
     (\cite{ktw}). Let $E$ be a uniformly convex Banach space and $r>0$. Then there exists a
      continuous strictly increasing convex function $g : [0, 2r] \rightarrow [0,\infty)$ such that $g(0) = 0$ and
      \begin{align*}
        g(\| x - y\|)\leq \phi (x, y),
      \end{align*}
for all $x, y \in B_{r}(0) = \{z \in E : \| z\|\leq r\}$.
     \end{lemma}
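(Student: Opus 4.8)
The plan is to derive Lemma~\ref{2.5} from Lemma~\ref{2.4}, exploiting that for a fixed $y$ the map $x\mapsto\phi(x,y)$ differs from the uniformly convex function $x\mapsto\|x\|^2$ only by an affine term, and that it attains its minimum precisely at $x=y$. (Here $E$ is taken smooth, as is needed for $\phi$ to be defined at all; otherwise one fixes a selection of $Jy$, which changes nothing below since $\langle y,Jy\rangle=\|y\|^2$ for every such selection.)

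First I would fix $y\in B_r(0)$ and set $h(x)=\|x\|^2-2\langle x,Jy\rangle$, so that $\phi(x,y)=h(x)+\|y\|^2$; since $h(y)=\|y\|^2-2\langle y,Jy\rangle=-\|y\|^2$, this gives $\phi(x,y)=h(x)-h(y)$. The first fact to record is that $y$ minimizes $h$ on all of $E$: for every $x$,
\begin{equation*}
h(x)=\|x\|^2-2\langle x,Jy\rangle\ \ge\ \|x\|^2-2\|x\|\,\|y\|\ \ge\ -\|y\|^2=h(y),
\end{equation*}
using $\|Jy\|=\|y\|$ and $2\|x\|\,\|y\|\le\|x\|^2+\|y\|^2$.

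Next I would take the continuous, strictly increasing, convex function $g$ with $g(0)=0$ furnished by Lemma~\ref{2.4} and add the affine map $x\mapsto-2\langle x,Jy\rangle$ to both sides of its defining inequality. Because $B_r(0)$ is convex, this yields, for all $x,y\in B_r(0)$ and $t\in(0,1]$,
\begin{equation*}
h\bigl(tx+(1-t)y\bigr)\ \le\ t\,h(x)+(1-t)\,h(y)-t(1-t)\,g(\|x-y\|).
\end{equation*}
By the minimality just established, $h(y)\le h\bigl(tx+(1-t)y\bigr)$; rearranging and dividing by $t>0$ gives $h(y)\le h(x)-(1-t)\,g(\|x-y\|)$, and letting $t\to0^+$ gives $g(\|x-y\|)\le h(x)-h(y)=\phi(x,y)$, which is the assertion. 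Since $g$ is literally the function supplied by Lemma~\ref{2.4}, it has all the stated properties.

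This reduction is essentially mechanical, so I do not anticipate a genuine obstacle; the only points needing attention are the minimality of $h$ at $y$ and the fact that every convex combination $tx+(1-t)y$ stays in $B_r(0)$ so that Lemma~\ref{2.4} applies, both of which are immediate. If one preferred to avoid invoking Lemma~\ref{2.4}, an alternative is a contradiction argument: were there no such $g$, one could extract $\{x_n\},\{y_n\}\subset B_r(0)$ with $\|x_n-y_n\|$ bounded away from $0$ while $\phi(x_n,y_n)\to0$, and Lemma~\ref{2.1.0} would then force $x_n-y_n\to0$, a contradiction; one would then regularize the resulting modulus into a continuous, strictly increasing, convex function vanishing at the origin.
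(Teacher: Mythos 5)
Your argument is correct. Note first that the paper itself offers no proof of this lemma: it is quoted from Kamimura--Takahashi \cite{ktw} as a known preliminary, so there is no internal proof to compare against. Your derivation from Lemma \ref{2.4} is sound: the identity $\phi(x,y)=h(x)-h(y)$ with $h(x)=\|x\|^2-2\langle x,Jy\rangle$, the global minimality of $h$ at $y$ (via $\|x\|^2-2\|x\|\|y\|+\|y\|^2\ge 0$), the transfer of Xu's inequality to $h$ (the affine term splits exactly over convex combinations), and the passage $t\to 0^{+}$ after dividing by $t$ all check out, and the resulting $g$ is literally the one from Lemma \ref{2.4}, so it inherits continuity, strict monotonicity, convexity and $g(0)=0$. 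This is essentially the standard route to the cited result; the original argument in \cite{ktw} also reduces to Xu's inequality, but does so with the midpoint choice $t=\tfrac12$ and the estimate $-2\langle x,Jy\rangle=-2\langle x+y,Jy\rangle+2\|y\|^2\ge -2\|x+y\|\|y\|+2\|y\|^2$, which yields $\phi(x,y)\ge\tfrac12 g(\|x-y\|)$ and then rescales $g$; your version avoids that rescaling at the cost of the limit in $t$. Two cosmetic points: Lemma \ref{2.4} requires only $x,y\in B_r(0)$, so convexity of the ball is not actually needed for its application (the minimality of $h$ on all of $E$ already covers the convex combination), and your remark about choosing a selection of $Jy$ when $E$ is not smooth is apt, though in this paper the lemma is only used in uniformly smooth spaces where $J$ is single-valued. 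The closing contradiction sketch is dispensable and, as you note, would still require regularizing the modulus into a convex strictly increasing function, so the Lemma \ref{2.4} route is the right one to keep.
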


Throughout this paper, we assume that $f : C \times C \rightarrow \mathbb{R}$ be a bifunction satisfying the following conditions
     \begin{enumerate}
       \item [(A1)] $f(x,x)=0$  for all $x\in C$,
       \item [(A2)] f is monotone, i.e. $f(x, y)+ f(y, x) \leq 0$, for all $x, y \in C$,
       \item [(A3)] $\displaystyle\lim_{t\downarrow 0}f(tz+(1-t)x, y)\leq f(x, y)$, for all $x,y,z \in C$,
       \item [(A4)] for each $x\in C, y\mapsto f(x, y)$ is convex and lower semicontinuous.
     \end{enumerate}
   \begin{lemma}\label{2.7}
(\cite{lyc}) Let $C$ be a nonempty closed convex subset of a smooth, strictly convex and
reflexive Banach space $E$. Let $A :C\longrightarrow E^*$ be an $\alpha-$inverse-strongly monotone operator and $f$ be a bifunction from $C \times C$ to
$\mathbb{R}$ satisfying $(A_{1}) - (A_{4})$. Then for all $r > 0$ hold the following
    \begin{enumerate}
      \item [(i)] for $x\in E$, there exists $u\in C$ such that
           \begin{equation*}
              f(u,x)+ \langle Au,y-u \rangle+\frac{1}{r}\langle y-u,Ju-Jx\rangle\geq 0,\;\;\;\forall y\in C,
           \end{equation*}
      \item [(ii)]  if $E$ is additionally uniformly smooth and $K_{r}: E\longrightarrow C$ is defined as
     \begin{equation*}\label{kr}
       K_{r}(x)=\{ u\in C\;\; :\;\; f(u,y)+\langle Au,y-u\rangle+\frac{1}{r}\langle y-u,Ju-Jx\rangle\geq 0,\;\;\;\forall y\in C \}
     \end{equation*}
    \end{enumerate}
Then, the following conditions hold:
    \begin{enumerate}
      \item [(1)] $K_{r}$ is single-valued,
      \item [(2)] $K_{r}$ is firmly nonexpansive, i.e., for all $x, y\in E$,
         \begin{equation*}
           \langle K_rx - K_ry, JK_rx - JK_ry\rangle \leq \langle K_rx - K_ry, Jx - Jy\rangle,
         \end{equation*}
      \item [(3)] $F(K_{r}) = \hat{F(K_{r})}= GEP(f , A)$,
      \item [(4)] $GEP$ is a closed convex subset of $C$,
      \item [(5)] $\phi(p,K_{r}x)+\phi(K_{r}x,x)\leq \phi(p,x), \;\;\forall\;\;p\in F(K_{r})$.
    \end{enumerate}
   \end{lemma}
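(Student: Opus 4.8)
The plan is to reduce the whole statement to the monotonicity that is built into the three pieces of the auxiliary bifunction
\[
G(u,y)=f(u,y)+\langle Au,y-u\rangle+\tfrac1r\langle y-u,Ju-Jx\rangle .
\]
Note first that $f$ is monotone by (A2), that an $\alpha$-inverse-strongly monotone $A$ is in particular monotone, and that $J$ is (strictly) monotone on a strictly convex space; hence $G(u,y)+G(y,u)\le 0$, while $G(u,u)=0$. For part (i) I would check that $G$ inherits the Blum--Oettli/Ky Fan hypotheses: $y\mapsto G(u,y)$ is convex and lower semicontinuous (from (A4) and linearity of the two added terms), and $G$ is upper hemicontinuous in the first variable along segments (from (A3) together with the affine dependence of the last two terms on $u$ and hemicontinuity of $A$). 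Then one applies the classical KKM-lemma existence theorem for monotone equilibrium problems, first on the bounded sets $C_R=C\cap\{\|y\|\le R\}$ to obtain $u_R$, and then uses the coercive term $\tfrac1r\langle y-u,Ju-Jx\rangle$ to show that for $R$ large enough $u_R$ already solves the inequality on all of $C$; this gives the existence of $u=K_r x$. This is the step I expect to be the main obstacle, since everything afterwards is bookkeeping with monotonicity and the $\phi$-identities.

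For (1), if $u_1$ and $u_2$ both satisfy the defining inequality, substitute $y=u_2$ into the inequality for $u_1$ and $y=u_1$ into the inequality for $u_2$, add the two, cancel $f(u_1,u_2)+f(u_2,u_1)\le 0$ by (A2) and drop the nonnegative term $\langle u_1-u_2,Au_1-Au_2\rangle$; this forces $\langle u_1-u_2,Ju_1-Ju_2\rangle\le 0$, and strict monotonicity of $J$ yields $u_1=u_2$, so $K_r$ is single-valued. For (2), write the defining inequality for $K_rx$ with $y=K_ry$ and for $K_ry$ with $y=K_rx$, add them, and perform the same two cancellations; what remains is exactly
\[
\langle K_rx-K_ry,\,JK_rx-JK_ry\rangle\le\langle K_rx-K_ry,\,Jx-Jy\rangle ,
\]
i.e. $K_r$ is firmly nonexpansive.

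For (3): if $p\in GEP(f,A)$ then $p$ satisfies the defining inequality with $x=p$ (the $J$-term vanishes), so $p=K_rp$ by (1); conversely $p=K_rp$ gives $f(p,y)+\langle Ap,y-p\rangle\ge 0$ for all $y$, i.e. $p\in GEP(f,A)$, so $F(K_r)=GEP(f,A)$. The inclusion $\hat F(K_r)\subseteq GEP(f,A)$ is obtained from (2): for $x_n\rightharpoonup p$ with $x_n-K_rx_n\to 0$, firm nonexpansiveness together with the norm-to-norm uniform continuity of $J$ on bounded sets gives $Jx_n-JK_rx_n\to 0$, and passing to the limit in the defining inequality for $K_rx_n$ (using lower semicontinuity of $f(\cdot,y)$ in the second variable, (A3), and monotonicity to swap variables as in the Takahashi--Zembayashi argument) yields $p\in GEP(f,A)$. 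Part (4) then follows, since $GEP(f,A)=F(K_r)$ is the fixed-point set of a firmly (hence relatively) nonexpansive mapping on a smooth, strictly convex, reflexive space, which is closed and convex. Finally, for (5), substituting $y=p\in F(K_r)$ into the defining inequality for $K_rx$ and discarding the nonnegative $f$- and $A$-contributions as above gives $\langle K_rx-p,\,JK_rx-Jx\rangle\le 0$; combining this with identity \eqref{phi2} in the form $\phi(p,x)=\phi(p,K_rx)+\phi(K_rx,x)+2\langle p-K_rx,\,JK_rx-Jx\rangle$ immediately yields $\phi(p,K_rx)+\phi(K_rx,x)\le\phi(p,x)$.
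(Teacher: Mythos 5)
This lemma is not proved in the paper at all: it is quoted verbatim from the cited source \cite{lyc} (see also Takahashi--Zembayashi and Blum--Oettli for the underlying resolvent results), so there is no in-paper argument to compare yours against line by line. Your sketch is, in substance, the standard proof from that literature, and the individual steps you give are correct: the add-and-cancel computations for (1) and (2) using (A2), monotonicity of $A$ and strict monotonicity of $J$ are exactly right; the identification $F(K_r)=GEP(f,A)$ and the passage $\hat F(K_r)\subseteq GEP(f,A)$ via the monotonicity swap (so that the troublesome term $\langle AK_rx_n,\,y-K_rx_n\rangle$ is replaced by the weakly continuous $\langle Ay,\,K_rx_n-y\rangle$ before taking limits) is the Takahashi--Zembayashi argument you name; and (5) follows as you say from $\langle K_rx-p,\,JK_rx-Jx\rangle\le 0$ together with the three-point identity \eqref{phi2}. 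Two small remarks. First, the only step with real content is (i), and rather than rerunning the KKM-plus-coercivity machinery for your $G$, the shorter standard route is to set $F(u,y)=f(u,y)+\langle Au,\,y-u\rangle$, check that $F$ itself satisfies (A1)--(A4) (for (A3) use that an $\alpha$-inverse-strongly monotone $A$ is $\tfrac1\alpha$-Lipschitz, hence continuous), and then quote the known existence and resolvent lemma for plain equilibrium problems; your plan is correct but is essentially re-proving that cited lemma. Second, in (4) the closedness and convexity of $GEP(f,A)=F(K_r)$ should be justified via quasi-$\phi$-nonexpansiveness of $K_r$, i.e.\ via property (5) (or via the ``firmly nonexpansive type'' fixed-point-set theorem), so logically (5) should precede (4); since you do derive (5) independently of (4), this is only a matter of ordering, not a gap.
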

The normal cone for $C$ at a point $\upsilon\in C$ is denoted by $N_{C}(\upsilon)$, that is
$N_{C}(\upsilon) := \{x^*\in E^* : \langle \upsilon - y, x^*\rangle \geq 0, \forall y\in C\}$.
   \begin{lemma}\label{2.60}
(\cite{rrt}) Let $C$ be a nonempty closed convex subset of a Banach space $E$ and let $T$ be
monotone and hemicontinuous operator of $C$ into $E^*$ with $C = D(T)$. Let $B\subset E \times E^*$ be an
operator define as follows:\\
\begin{equation*}
 B v =\left\{
\begin{array}{lr}
Tv + N_{C}v,\qquad v \in C, \\
\emptyset,\qquad \qquad\qquad v \notin C.
\end{array} \right.
\end{equation*}
Then $B$ is maximal monotone and $B^{-1}(0) =$ SOL$(T,C)$.
   \end{lemma}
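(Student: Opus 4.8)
The assertion splits into three parts --- monotonicity of $B$, the identity $B^{-1}(0)=\mathrm{SOL}(T,C)$, and maximality of $B$ --- and I would handle them in that order, the first two being routine and the third carrying all the difficulty.

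\emph{Monotonicity.} For $v_1,v_2\in C$ and $w_i^*\in N_C(v_i)$, the definition of the normal cone gives $\langle v_1-v_2,w_1^*\rangle\ge 0$ and $\langle v_2-v_1,w_2^*\rangle\ge 0$, hence $\langle v_1-v_2,w_1^*-w_2^*\rangle\ge 0$; adding $\langle v_1-v_2,Tv_1-Tv_2\rangle\ge 0$, which holds since $T$ is monotone, shows that $B=T+N_C$ is monotone on $C$ (and trivially elsewhere). For the zero set: $0\in Bv$ means $v\in C$ and $-Tv\in N_C(v)$, i.e. $\langle v-y,-Tv\rangle\ge 0$ for all $y\in C$, i.e. $\langle Tv,y-v\rangle\ge 0$ for all $y\in C$; this is precisely the variational inequality defining $\mathrm{SOL}(T,C)$, so $B^{-1}(0)=\mathrm{SOL}(T,C)$.

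\emph{Maximality.} Let $(u,u^*)\in E\times E^*$ be monotonically related to the graph of $B$, that is $\langle u-v,\,u^*-Tv-w^*\rangle\ge 0$ for every $v\in C$ and every $w^*\in N_C(v)$; I must show $u\in C$ and $u^*\in Tu+N_C(u)$. Since each $N_C(v)$ is a convex cone, replacing $w^*$ by $tw^*$ with $t\ge 0$ turns this into $t\langle u-v,w^*\rangle\le\langle u-v,u^*-Tv\rangle$ for all $t\ge 0$; letting $t\to\infty$ forces $\langle u-v,w^*\rangle\le 0$ for all $v\in C$ and $w^*\in N_C(v)$, while $t=0$ gives $\langle u-v,u^*-Tv\rangle\ge 0$ for all $v\in C$. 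From the first of these I would deduce $u\in C$ by a separation/nearest-point argument: if $u\notin C$, take a nearest point $v$ of $C$ to $u$ (a closed convex subset of the reflexive spaces used here is proximinal) and a duality functional $w^*$ of $u-v$; the variational characterization of the nearest point shows $w^*\in N_C(v)$, so $\langle u-v,w^*\rangle=\|u-v\|^2>0$ --- contradicting the inequality just derived. Hence $u\in C$.

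\emph{Conclusion.} With $u\in C$ now known, I would fix an arbitrary $y\in C$ and set $v_t=(1-t)u+ty$, which lies in $C$ for $t\in(0,1]$. Since $u-v_t=-t(y-u)$, the inequality $\langle u-v_t,u^*-Tv_t\rangle\ge 0$ reads $\langle y-u,u^*-Tv_t\rangle\le 0$. As $t\downarrow 0$, $v_t\to u$ along the segment $[u,y]\subseteq C$, so hemicontinuity of $T$ gives $Tv_t\rightharpoonup Tu$ and therefore $\langle y-u,Tv_t\rangle\to\langle y-u,Tu\rangle$; passing to the limit yields $\langle y-u,u^*-Tu\rangle\le 0$, i.e. $\langle u-y,u^*-Tu\rangle\ge 0$, for every $y\in C$. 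Thus $u^*-Tu\in N_C(u)$, so $u^*\in Tu+N_C(u)=Bu$, and $B$ is maximal monotone. The main obstacle is precisely the maximality argument, and within it the step showing $u\in C$: it is the only place where one must invoke the geometry of $C$ (a supporting/duality functional) rather than merely rearrange the given inequalities, and it is also the step sensitive to the ambient space; the final passage to the limit is the familiar Minty device and is comparatively mechanical.
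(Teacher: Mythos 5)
The paper does not prove this lemma at all: it is quoted as a known result of Rockafellar (\cite{rrt}), so there is no internal proof to compare against. On its own terms, your argument is the standard one — monotonicity of $T+N_C$ by adding the two defining inequalities, $B^{-1}(0)=\mathrm{SOL}(T,C)$ by unwinding $-Tv\in N_C(v)$, the cone trick ($t\to\infty$ versus $t=0$) to split the monotone-relatedness condition, and Minty's segment device $v_t=(1-t)u+ty$ with hemicontinuity to conclude $u^*-Tu\in N_C(u)$ — and it is sound in the setting where the lemma is actually used in this paper, since there $E$ is $2$-uniformly convex and uniformly smooth, hence reflexive, strictly convex and smooth. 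Two caveats are worth recording. First, the lemma is stated for an arbitrary Banach space, and your one genuinely geometric step — deducing $u\in C$ from $\langle u-v,w^*\rangle\le 0$ for all $v\in C$, $w^*\in N_C(v)$ via a nearest point of $C$ — does not survive that generality: closed convex sets need not be proximinal outside reflexive spaces, and a separating functional need not attain its supremum on an unbounded $C$ even in $\mathbb{R}^2$. The general-Banach-space proof replaces this step by observing that the displayed condition says $(u,0)$ is monotonically related to $N_C=\partial\delta_C$, and then invokes maximal monotonicity of subdifferentials (Bishop--Phelps/Br{\o}ndsted--Rockafellar), which is how Rockafellar's original argument goes. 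Second, a small precision point: in a non-smooth space the duality mapping is multivalued, and the best-approximation characterization only guarantees that \emph{some} element of $J(u-v)$ lies in $N_C(v)$, not an arbitrary one; you should phrase the choice of $w^*$ accordingly (in the paper's uniformly smooth $E$ this is moot). With these adjustments, or with the standing reflexivity/smoothness hypotheses of the paper made explicit, your proof is correct.
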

 \section{Main results}
In this section, we introduce a new iterative algorithms for solving monotone variational
inequality problems which are based on Tseng’s intergradient method.We prove strong convergence theorems for generated sequences by presented intergradient
algorithms, under suitable conditions.

 Throughout this section, we assume that $C$ is a nonempty closed convex subset of a real 2-uniformly convex and uniformly smooth Banach
space $E$ and $E^*$ is the dual space of $E$, $A : C\rightarrow E^*$ is an $\alpha$-inverse strongly monotone operator. Assume that $\{\lambda_n\}$ is a sequence of real numbers such that $0<\lambda_n < \frac{c^2\alpha}{2}$ for all $n\in \mathbb{N}$, where $\frac{1}{c}$ is the 2-uniformly convexity constant of $E$.
\begin{theorem}\label{jmen}  Let $x_{0}\in C$, $\Gamma:=VI(C, A)\cap F(f)\neq \emptyset$ and
   \begin{equation}\label{algo}
     \begin{cases}
       y_n=\Pi_CJ^{-1}(Jx_n-\lambda_nAx_n),\\
       z_n=J^{-1}(Jy_n-\lambda_nAy_n),\\
       x_{n+1}=\Pi_CJ^{-1}(\alpha_{n,1}Jx_n +\alpha_{n,2}Jf(x_n)+\alpha_{n,3}Jz_n),
    \end{cases}
   \end{equation}
where $\{\lambda_n\}\subseteq [0, 1]$ such that $\displaystyle\lim_{n\rightarrow \infty}\lambda_n=0$. Let $\{\alpha_{n,i}\}\subset(0,1)$ for $i=1, 2, 3$, $\alpha_{n,1}+\alpha_{n,2}+\alpha_{n,3}=1$ and $\displaystyle\liminf_{n\rightarrow\infty}\alpha_{n,2}\alpha_{n,3}>0$. Let f be a relatively nonexpansive
    self-mapping on $C$ and $\| Ax\|\leq \| Ax-Au\|$ for all $x\in C$ and $u\in \Gamma$. Consider the sequence $\{x_n\}$
generated by the algorithm \eqref{algo}. Then the sequence $\{x_n\}$ converges strongly to $q=\Pi_{VI(C,A)}\circ f(q)$, where $P_{VI(C,A)}\circ f: H \rightarrow VI(C, A)$ is the mapping defined by $P_{VI(C,A)}\circ f(x)=P_{VI(C,A)}(f(x))$ for each $x\in H$.
  \end{theorem}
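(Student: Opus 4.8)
The plan is to combine a Tseng-type estimate with a Halpern/viscosity argument, working throughout with the functional $\phi$ rather than the norm. Write $w_n:=J^{-1}(Jx_n-\lambda_nAx_n)$ and $t_n:=J^{-1}(\alpha_{n,1}Jx_n+\alpha_{n,2}Jf(x_n)+\alpha_{n,3}Jz_n)$, so $y_n=\Pi_Cw_n$ and $x_{n+1}=\Pi_Ct_n$. Three standing remarks: $\alpha$-inverse strong monotonicity makes $A$ Lipschitz with constant $1/\alpha$; putting $x=u$ in the hypothesis $\|Ax\|\le\|Ax-Au\|$ forces $Au=0$ for every $u\in\Gamma$ (so $\|Ax_n\|=\|Ax_n-Au\|$ and $\|Ay_n\|=\|Ay_n-Au\|$); and since $E$ is uniformly smooth, $E^*$ is uniformly convex, so Lemmas \ref{2.4} and \ref{2.5} apply in $E^*$, while $\Gamma$ is closed and convex (Lemma \ref{2.60} for $VI(C,A)=B^{-1}(0)$, and $F(f)$ is closed convex since $f$ is relatively nonexpansive). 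Fixing $u\in\Gamma$ and using $V(x,x^*)=\phi(x,J^{-1}x^*)$ with \eqref{vq}, $\alpha$-inverse strong monotonicity, $u\in VI(C,A)$, Lemma \ref{2.2} and $Au=0$, I would first derive the Tseng bounds $\phi(u,z_n)\le\phi(u,y_n)-2\lambda_n(\alpha-\tfrac{2\lambda_n}{c^2})\|Ay_n-Au\|^2$ and $\phi(u,w_n)\le\phi(u,x_n)-2\lambda_n(\alpha-\tfrac{2\lambda_n}{c^2})\|Ax_n-Au\|^2$, together with $\phi(u,y_n)\le\phi(u,w_n)-\phi(y_n,w_n)$ (Lemma \ref{2.1.2}); the factor $\alpha-\tfrac{2\lambda_n}{c^2}>0$ because $\lambda_n<\tfrac{c^2\alpha}{2}$. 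Combining convexity of $\|\cdot\|^2$ with Lemma \ref{2.4} in $E^*$ applied to $(Jf(x_n),Jz_n)$, with Lemma \ref{2.1.2}, and with $\phi(u,f(x_n))\le\phi(u,x_n)$, I arrive at
\begin{align*}
\phi(u,x_{n+1})\le{}&\phi(u,x_n)-\alpha_{n,2}\alpha_{n,3}\,g(\|Jf(x_n)-Jz_n\|)\\
&-\alpha_{n,3}\bigl[2\lambda_n(\alpha-\tfrac{2\lambda_n}{c^2})(\|Ax_n-Au\|^2+\|Ay_n-Au\|^2)+\phi(y_n,w_n)\bigr].
\end{align*}
Hence $\{\phi(u,x_n)\}$ is nonincreasing, so it converges and all generated sequences are bounded, and since $\liminf_n\alpha_{n,2}\alpha_{n,3}>0$ the nonnegative defects vanish; in particular $\phi(y_n,w_n)\to0$ and $g(\|Jf(x_n)-Jz_n\|)\to0$.

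Next, asymptotic regularity: by Lemma \ref{2.1.0}, $y_n-w_n\to0$; the properties of $g$ give $Jf(x_n)-Jz_n\to0$, hence $f(x_n)-z_n\to0$ ($J^{-1}$ uniformly norm-to-norm continuous on bounded sets); and since $\lambda_n\to0$ with $\{\|Ax_n\|\},\{\|Ay_n\|\}$ bounded, Lemma \ref{2.2} gives $w_n-x_n\to0$ and $z_n-y_n\to0$. Chaining yields $x_n-y_n\to0$, $y_n-z_n\to0$, $z_n-f(x_n)\to0$, hence $x_n-f(x_n)\to0$; moreover $Jt_n-Jx_n=\alpha_{n,2}(Jf(x_n)-Jx_n)+\alpha_{n,3}(Jz_n-Jx_n)\to0$, so $t_n-x_n\to0$, and $\phi(x_n,x_{n+1})\le\phi(x_n,t_n)\to0$ (Lemma \ref{2.1.2}, \eqref{phixy}), i.e. $x_{n+1}-x_n\to0$. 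Now let $x_{n_k}\rightharpoonup p$: relative nonexpansiveness of $f$ with $x_{n_k}-f(x_{n_k})\to0$ gives $p\in\hat{F}(f)=F(f)$, and $x_n-y_n\to0$ gives $y_{n_k}\rightharpoonup p\in C$. For $p\in VI(C,A)$ I would use Lemma \ref{2.60} with $B=A+N_C$: for $(v,v^*)\in B$, $v^*=Av+\eta$ with $\eta\in N_C(v)$, the inclusion $y_n\in C$ gives $\langle v-y_n,v^*\rangle\ge\langle v-y_n,Av\rangle\ge\langle v-y_n,Ay_n\rangle$ by monotonicity, while Lemma \ref{2.1.1} applied to $y_n=\Pi_Cw_n$ gives $\langle v-y_n,Ax_n\rangle\ge\tfrac1{\lambda_n}\langle v-y_n,Jx_n-Jy_n\rangle$; combining these with $\|Ax_n-Ay_n\|\to0$ and $Jx_n-Jy_n=\lambda_nAx_n+(Jw_n-Jy_n)$ one passes to the limit along $(n_k)$, deduces $\langle v-p,v^*\rangle\ge0$, and by maximality of $B$ gets $0\in Bp$, i.e. $p\in VI(C,A)$; thus $p\in\Gamma$.

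For strong convergence, let $q\in VI(C,A)$ be the point with $\langle y-q,Jf(q)-Jq\rangle\le0$ for all $y\in VI(C,A)$, that is $q=\Pi_{VI(C,A)}f(q)$ (which exists since any $p$ found above lies in $\Gamma$ and is then a fixed point of $\Pi_{VI(C,A)}\circ f$). Taking $u=q$, re-anchoring the $V$-estimate of $\phi(q,t_n)$ at $\alpha_{n,2}Jf(q)$ via \eqref{vq}, and using $q\in F(f)$ (so $\phi(q,f(q))=0$) and $\phi(q,z_n)\le\phi(q,x_n)$, I would obtain a recursion $\phi(q,x_{n+1})\le(1-\alpha_{n,2})\phi(q,x_n)+\alpha_{n,2}\mu_n$, where $\mu_n$ is built from $\langle t_n-q,Jf(x_n)-Jf(q)\rangle$ and from the nonpositive defect terms of the first step. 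Using asymptotic regularity ($t_n-x_n\to0$, $Jf(x_n)-Jx_n\to0$) and the fact that every weak cluster point of $\{x_n\}$ lies in $\Gamma$ together with the defining inequality of $q$, one shows $\limsup_n\mu_n\le0$; since $\liminf\alpha_{n,2}>0$ forces $\sum_n\alpha_{n,2}=\infty$, the standard control lemma (Xu) gives $\phi(q,x_n)\to0$, hence $x_n\to q$ by Lemma \ref{2.1.0}. As $q=\Pi_{VI(C,A)}f(q)$, this is the claim.

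The main obstacle is the identification $p\in VI(C,A)$: since $\lambda_n\to0$, the projection inequality for $y_n$ carries a factor $1/\lambda_n$, and the limit passage must be handled with care. A helpful preliminary estimate is that $2$-uniform convexity of $E$ (via Lemma \ref{2.5}) together with $\langle x_n-y_n,Jx_n-Jy_n\rangle\le\lambda_n\langle x_n-y_n,Ax_n\rangle$ forces $\|x_n-y_n\|=O(\lambda_n)$; one then combines this with $Jx_n-Jy_n=\lambda_nAx_n+(Jw_n-Jy_n)$ and $\langle v-y_n,Jw_n-Jy_n\rangle\le0$ to control $\langle v-y_n,Jw_n-Jy_n\rangle/\lambda_n$ in the limit. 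A second delicate point is verifying $\limsup_n\mu_n\le0$: because $q\in F(f)$, the term $\langle t_n-q,Jf(x_n)-Jf(q)\rangle$ is, up to vanishing errors, $\langle x_n-q,Jx_n-Jq\rangle\ge0$, so closing the recursion requires retaining and balancing the nonpositive defect terms coming from the first step.
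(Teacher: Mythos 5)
Your first half (the Tseng-type estimates using \eqref{vq} and Lemma \ref{2.2}, the observation $Au=0$, the monotone decrease of $\phi(u,x_n)$, boundedness, and the asymptotic regularity $x_n-y_n\to0$, $y_n-z_n\to0$, $f(x_n)-z_n\to0$, $x_{n+1}-x_n\to0$) matches the paper's computations and is fine. The first genuine gap is your strong-convergence mechanism. The paper does not run a Halpern/viscosity argument at this point (it passes from $\|x_{n+1}-x_n\|\to0$ to the assertion that $\{x_n\}$ is Cauchy); you replace this by the recursion $\phi(q,x_{n+1})\le(1-\alpha_{n,2})\phi(q,x_n)+\alpha_{n,2}\mu_n$ plus Xu's control lemma, but the hypotheses of the theorem do not support that scheme: $f$ is only relatively nonexpansive (not a contraction), $q$ is one of its fixed points, and $\alpha_{n,2}$ is bounded away from $0$ rather than tending to $0$. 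Consequently there is no mechanism forcing $\limsup_n\mu_n\le0$; as you concede, the dominant contribution to $\mu_n$ is, up to vanishing errors, $\langle x_n-q,Jx_n-Jq\rangle=\tfrac12\bigl(\phi(q,x_n)+\phi(x_n,q)\bigr)\ge0$, which is of the same order as the quantity $\phi(q,x_n)$ you are trying to annihilate, while the nonpositive defect terms you hope to balance it with carry factors $\lambda_n\to0$ or $g_1(\|Jf(x_n)-Jz_n\|)\to0$ and are in no way comparable to $\phi(q,x_n)$. As written the recursion does not close, so convergence to the specific anchor $q$ is not established; this is the heart of the theorem and cannot be left as a balancing act to be arranged later.

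The second gap is the identification of weak cluster points with $VI(C,A)$, which you correctly single out as the main obstacle, but your proposed remedy does not repair it. After using Lemma \ref{2.1.1} the estimate you need is a lower bound of the form $\liminf_k\frac{1}{\lambda_{n_k}}\langle v-y_{n_k},Jx_{n_k}-Jy_{n_k}\rangle\ge0$, and both of your ingredients point the wrong way: the projection inequality $\langle v-y_n,Jw_n-Jy_n\rangle\le0$ bounds the troublesome term from above, not below, and $\|x_n-y_n\|=O(\lambda_n)$ (which you can indeed get from $2$-uniform convexity) does not transfer to $\|Jx_n-Jy_n\|=O(\lambda_n)$, since $J$ need not be Lipschitz on a $2$-uniformly convex, uniformly smooth space — that would require $2$-uniform smoothness, which is not assumed. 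So the $1/\lambda_n$ factor remains uncontrolled and $p\in VI(C,A)$ is not proved. Note also that you cannot simply fall back on the paper here: its argument fixes $(\upsilon,w)$ in the graph of the $n$-dependent operator $\lambda_nA+N_C$ and then lets $n\to\infty$, which is not an application of Lemma \ref{2.60} to a single maximal monotone operator, so this step needs an honest new argument (for instance, quantitative assumptions relating $\|Jx_n-Jy_n\|$ to $\lambda_n$, or additional geometric hypotheses on $E$), not the sketch given.
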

\begin{proof}
Let $ \hat{u}\in \Gamma$. From the definition of function $V$ and the inequality \eqref{vq}, it is concluded that
       \begin{align}\label{uzn}
             \phi (\hat{u},z_n)=& \phi(\hat{u},J^{-1}(Jy_n-\lambda_nAy_n))\nonumber \\
                =&V(\hat{u},Jy_n-\lambda_nAy_n)\nonumber \\
             \leq& V(\hat{u},Jy_n)-2\langle J^{-1}(Jy_n-\lambda_nAy_n)-\hat{u},\lambda_nAy_n\rangle\nonumber\\
             =&\phi (\hat{u},y_n)+2\langle J^{-1}(Jy_n-\lambda_nAy_n)-J^{-1}(Jy_n),-\lambda_nAy_n\rangle\nonumber\\
             &-2\langle y_n-\hat{u},\lambda_nAy_n\rangle,
       \end{align}
then from Lemma \ref{2.2} and the condition $\| Ax\| \leq \| Ax-A\hat{u} \|$ for all $x\in C$, it is followed that
        \begin{align}\label{ineq1}
            2\langle J^{-1}(Jy_n- & \lambda_nAy_n)-J^{-1}(Jy_n),-\lambda_nAy_n\rangle\nonumber \\
           \leq & 2\| J^{-1}(Jy_n-\lambda_nAy_n)-J^{-1}(Jy_n)\|\|
           -\lambda_nAy_n\| \nonumber\\
           \leq & \frac{4\lambda_n^2}{c^2}\| Ay_n\|^2\nonumber\\
           \leq & \frac{4\lambda_n^2}{c^2}\| Ay_n-A\hat{u}\|^2.
         \end{align}
Since $A$ is $\alpha$-inverse strongly monotone and the fact that $\hat{u}\in VI(C, A)$, we have
       \begin{align}\label{ineq2}
          -2\langle y_n-\hat{u},&\lambda_nAy_n\rangle\nonumber\\
             =&-2\lambda_n\langle y_n-\hat{u}, Ay_n-A\hat{u}\rangle-2\lambda_n\langle y_n-\hat{u}, A\hat{u}\rangle\nonumber\\
             \leq&-2\lambda_n\langle y_n-\hat{u}, Ay_n-A\hat{u}\rangle\nonumber\\
             \leq &-2\lambda_n\alpha\| Ay_n-A\hat{u}\|^2,
       \end{align}
substituting \eqref{ineq1} and \eqref{ineq2} in \eqref{uzn} and using our assumptions, we obtain
       \begin{align*}
             \phi(\hat{u},z_n)\leq& \phi(\hat{u},y_n)+(\frac{4\lambda_n^2}{c^2}-2\lambda_n\alpha)\|  Ay_n-Au\|^2\\
             =&\phi(\hat{u},y_n)+2\lambda_n(\frac{2\lambda_n}{c^2}-\alpha) \|  Ay_n-A\hat{u}\|^2\\\leq& \phi(\hat{u},y_n),
       \end{align*}
hence,
       \begin{equation}\label{one}
          \phi(\hat{u}, z_n)\leq \phi(\hat{u},y_n).
       \end{equation}
From Lemma \ref{2.1.2} and the inequality \eqref{vq}, we have
       \begin{align}\label{xxx}
           \phi(\hat{u},y_n)=& \phi(\hat{u},\Pi_CJ^{-1}(Jx_n-\lambda_nAx_n))\nonumber\\
           \leq& \phi(\hat{u},J^{-1}(Jx_n-\lambda_nAx_n))=V(\hat{u},Jx_n-\lambda_nAx_n)\nonumber\\
           \leq& V(\hat{u},Jx_n)-2\langle J^{-1}(Jx_n-\lambda_nAx_n)-\hat{u},\lambda_nAx_n\rangle\nonumber\\
           =& \phi(\hat{u},x_n)-2\lambda_n\langle x_n-\hat{u},Ax_n\rangle\nonumber\\
           &+2\langle J^{-1}(Jx_n-\lambda_nAx_n)-J^{-1}(Jx_n),-\lambda_nAx_n\rangle,
       \end{align}
since $A$ is $\alpha-$inverse strongly monotone and $\hat{u}\in VI(C, A)$, it follows that
       \begin{align}\label{ary}
           -2\lambda_n\langle x_n-\hat{u},&Ax_n\rangle\nonumber\\
           =&-2\lambda_n\langle x_n-\hat{u},Ax_n-A\hat{u}\rangle-2\lambda_n\langle x_n-\hat{u},A\hat{u}\rangle\nonumber\\
           \leq&-2\lambda_n\langle x_n-\hat{u},Ax_n-A\hat{u}\rangle\nonumber\\
           \leq&-2\lambda_n\alpha \| Ax_n-A\hat{u}\|^2.
       \end{align}
From Lemma \ref{2.2} and our assumptions, we can conclude
    \begin{align}\label{t}
        2\langle J^{-1}(Jx_n-\lambda_n&Ax_n)-J^{-1}(Jx_n),-\lambda_nAx_n\rangle\nonumber\\
        \leq& 2\| J^{-1}(Jx_n-\lambda_nAx_n)- J^{-1}(Jx_n)\| \|-\lambda_nAx_n\|\nonumber\\
        \leq& \frac{4\lambda_n^2}{c^2}\| Ax_n\|^2\nonumber\\
        \leq& \frac{4\lambda_n^2}{c^2}\| Ax_n-A\hat{u}\|^2.
     \end{align}
By applying \eqref{ary} and \eqref{t} in \eqref{xxx} and our assumptions, it is implied that
   \begin{equation}\label{two}
      \phi(\hat{u},y_n)\leq \phi(\hat{u},x_n)+2\lambda_n(\frac{2\lambda_n}{c^2}-\alpha) \|  Ay_n-Ax_n\|^2\leq \phi(\hat{u},x_n).
   \end{equation}
Hence, from \eqref{one} and \eqref{two}, we have
\begin{equation}\label{three}
 \phi(\hat{u},z_n)\leq \phi(\hat{u},x_n).
\end{equation}
Next, it will be shown that the sequence $\{\phi(\hat{u},x_n)\}$ is decreasing.
From the relatively  nonexpansiveness condition of $f$, convexity of $\|.\|^2$, Lemma \ref{2.1.2} and the inequality \eqref{three}, it is implied that
   \begin{align}\label{decreas}
      \phi(\hat{u}, x_{n+1})\leq& \phi(\hat{u}, J^{-1}(\alpha_{n,1}Jx_n +\alpha_{n,2}Jf(x_n)+\alpha_{n,3}Jz_n)\nonumber\\
      =&\| \hat{u}\|^2-2\langle \hat{u}, \alpha_{n,1}Jx_n +\alpha_{n,2}Jf(x_n)+\alpha_{n,3}Jz_n\rangle \nonumber\\
      &+\| \alpha_{n,1}Jx_n +\alpha_{n,2}Jf(x_n)+\alpha_{n,3}Jz_n\|^2\nonumber\\
      \leq& \| \hat{u}\|^2-2\alpha_{n,1}\langle \hat{u}, Jx_n\rangle-2\alpha_{n,2}\langle \hat{u}, Jf(x_n)\rangle-2\alpha_{n,3}\langle \hat{u}, Jz_n\rangle\nonumber \\
      &+\alpha_{n,1}\|x_n\|^2+\alpha_{n,2}\| f(x_n)\|^2+\alpha_{n,3}\| z_n\|^2\nonumber\\
      =& \alpha_{n,1}\phi(\hat{u},x_n)+\alpha_{n,2}\phi(\hat{u},f(x_n))+\alpha_{n,3}\phi(\hat{u}, z_n)\nonumber\\
      \leq & \alpha_{n,1}\phi(\hat{u},x_n)+\alpha_{n,2}\phi(\hat{u},x_n)+\alpha_{n,3}\phi(\hat{u}, x_n)\nonumber\\
      =& \phi(\hat{u}, x_n),
   \end{align}
so $\{\phi(\hat{u},x_n)\}$ is decreasing. Then it is implied that $\{\phi(\hat{u},x_n)\}$ is bounded, hence $\displaystyle \lim_{n\rightarrow\infty} \phi(\hat{u}, x_n)$ exists. Then from \eqref{phi1}, $\{x_n\}$ is bounded. It follows from the relatively nonexpansiveness condition of $f$, \eqref{two} and \eqref{three} that $\{f(x_n)\}$, $\{y_n\}$ and  $\{z_n\}$ are bounded. From Lemmas \ref{2.1.2}, \ref{2.2}, the inequality \eqref{vq} and the condition $\displaystyle\lim_{n\rightarrow\infty}\lambda_n=0$, we have
\begin{align}\label{a}
  \phi(x_n, y_n)&\leq \phi(x_n, J^{-1}(Jx_n-\lambda_nAx_n))\nonumber\\
  =&V(x_n, Jx_n-\lambda_nAx_n)\nonumber\\
  \leq&V(x_n, Jx_n)-2\langle  J^{-1}(Jx_n-\lambda_nAx_n)-x_n, \lambda_nAx_n)\nonumber\\
  =&\phi(x_n, x_n)-2\langle  J^{-1}(Jx_n-\lambda_nAx_n)-J^{-1}(Jx_n), \lambda_nAx_n)\rangle\nonumber\\
  \leq& 2\|J^{-1}(Jx_n-\lambda_nAx_n)-J^{-1}(Jx_n) \|\|\lambda_nAx_n \|\nonumber\\
   \leq&\frac{4\lambda_n^2}{c^2}\|Ax_n \|^2 \rightarrow 0 \;\;\;as\; n\rightarrow \infty.
 \end{align}
  By Lemma \ref{2.1.0}, it is implied that
 \begin{equation}\label{c}
   \displaystyle\lim_{n\rightarrow\infty}\|x_n-y_n\|= 0.
 \end{equation}
 Next, from \eqref{phixy}, \eqref{c}, the boundedness  of the sequences $\{x_n\}$ and $\{y_n\}$, and using uniformly norm-to-norm continuity of $J$ on bounded sets,
  it is obvious that
     \begin{equation}\label{c1}
      \phi(y_n,x_n)\leq\|y_n\|\|Jy_n-Jx_n\|+\|x_n-y_n\|\|x_n\|\rightarrow 0 \;\;\;as\; n\rightarrow \infty.
   \end{equation}
 By Lemmas \ref{2.1.2}, \ref{2.2}, the inequality \eqref{vq} and the condition $\displaystyle\lim_{n\rightarrow\infty}\lambda_n=0$, we have
  \begin{align}\label{d}
  \phi(y_n, z_n)&= \phi(y_n, J^{-1}(Jy_n-\lambda_nAy_n))\nonumber\\
  =&V(y_n, Jy_n-\lambda_nAy_n)\nonumber\\
  \leq&V(y_n, Jy_n)-2\langle  J^{-1}(Jy_n-\lambda_nAy_n)-y_n, \lambda_nAy_n)\nonumber\\
  =&\phi(y_n, y_n)-2\langle  J^{-1}(Jy_n-\lambda_nAy_n)-J^{-1}(Jy_n), \lambda_nAy_n)\rangle\nonumber\\
  \leq& 2\|J^{-1}(Jy_n-\lambda_nAy_n)-J^{-1}(Jy_n) \|\|\lambda_nAy_n \|\nonumber\\
   \leq&\frac{4\lambda_n^2}{c^2}\|Ay_n \|^2 \rightarrow 0 \;\;\;as\; n\rightarrow \infty.
 \end{align}
  By Lemma \ref{2.1.0}, it is implied that
 \begin{equation}\label{e}
   \displaystyle\lim_{n\rightarrow\infty}\|y_n-z_n\|= 0 .
 \end{equation}
  Since $\{f(x_n)\}$ and  $\{z_n\}$ are bounded. Now, setting $r_{1}=sup\{\| f(x_n)\| , \| z_n\| \}$, by
  Lemma \ref{2.4} there exists a continuous strictly increasing and convex function $ g_{1}:[0,2r_{1}]\longrightarrow [0,\infty ]$ with $g_{1}(0)=0$. From
  \eqref{three}, Lemmas \ref{2.1.2}, \ref{2.4} and the condition relatively nonexpansiveness of $f$, it is concluded for each $\hat{u}\in \Gamma$ that
\begin{align*}
      \phi(\hat{u}, x_{n+1})\leq& \phi(\hat{u}, J^{-1}(\alpha_{n,1}Jx_n +\alpha_{n,2}Jf(x_n)+\alpha_{n,3}Jz_n)\nonumber\\
      =&\| \hat{u}\|^2-2\langle \hat{u}, \alpha_{n,1}Jx_n +\alpha_{n,2}Jf(x_n)+\alpha_{n,3}Jz_n\rangle \nonumber\\
      &+\| \alpha_{n,1}Jx_n +\alpha_{n,2}Jf(x_n)+\alpha_{n,3}Jz_n\|^2\nonumber\\
      \leq& \| \hat{u}\|^2-2\alpha_{n,1}\langle \hat{u}, Jx_n\rangle-2\alpha_{n,2}\langle \hat{u}, Jf(x_n)\rangle-2\alpha_{n,3}\langle \hat{u}, Jz_n\rangle\nonumber \\
      &+\alpha_{n,1}\|x_n\|^2+\alpha_{n,2}\| f(x_n)\|^2+\alpha_{n,3}\| z_n\|^2\nonumber\\
      &-\alpha_{n,2}\alpha_{n,3}g_{1}(\| Jf(x_n)-Jz_n\|)\nonumber\\
       =& \alpha_{n,1}\phi(\hat{u},x_n)+\alpha_{n,2}\phi(\hat{u},f(x_n))+\alpha_{n,3}\phi(\hat{u}, z_n)\nonumber\\
        &-\alpha_{n,2}\alpha_{n,3}g_{1}(\| Jf(x_n)-Jz_n\|)\nonumber\\
      \leq & \alpha_{n,1}\phi(\hat{u},x_n)+\alpha_{n,2}\phi(\hat{u},x_n)+\alpha_{n,3}\phi(\hat{u}, x_n)\nonumber\\
       &-\alpha_{n,2}\alpha_{n,3}g_{1}(\| Jf(x_n)-Jz_n\|)\nonumber\\
      =& \phi(\hat{u}, x_n)-\alpha_{n,2}\alpha_{n,3}g_{1}(\| Jf(x_n)-Jz_n\|),
\end{align*}
therefore
\begin{equation*}
  \alpha_{n,2}\alpha_{n,3}g_{1}(\| Jf(x_n)-Jz_n\|)\leq \phi(\hat{u}, x_n)-\phi(\hat{u}, x_{n+1}).
\end{equation*}
Since $\liminf_{n\rightarrow\infty}\alpha_{n,2}\alpha_{n,3}>0$, we have
  \begin{equation}\label{g1}
      \lim_{n\rightarrow \infty} g_{1}(\| Jf(x_n)-Jz_n\|)= 0,
   \end{equation}
    because $\{\phi(\hat{u},x_n)\}$ is Cauchy and $\displaystyle\lim_{n\rightarrow \infty}\alpha_{n,2}\alpha_{n,3}>0$.
Since $g_{1}$ is a continuous function, so
    \begin{equation}\label{g11}
      g_{1}(\lim_{n\rightarrow \infty} \| Jf(x_n)-Jz_n\|)=
      \lim_{n\rightarrow \infty} g_{1}(\| Jf(x_n)-Jz_n\|)=0=g_{1}(0),
    \end{equation}
and also $g_{1}$  is strictly increasing, hence
 \begin{equation}\label{g111}
      \lim_{n\rightarrow \infty} \| Jf(x_n)-Jz_n\|)= 0.
   \end{equation}
On the other hand, since $J^{-1}$ is uniformly norm-to-norm continuous on bounded sets, we obtain that
     \begin{equation}\label{fxz}
      \lim_{n\rightarrow \infty}\| f(x_n)-z_n\|=\lim_{n\rightarrow\infty}\| J^{-1}(Jf(x_n))-J^{-1}( Jz_n)\|=0.
      \end{equation}

Next, from \eqref{phixy} and \eqref{fxz}, we have
\begin{equation}\label{fxnzn}
  \displaystyle\lim_{n\rightarrow\infty}\phi(z_n, f(x_n))=0.
\end{equation}
Similarly, from \eqref{phixy}, \eqref{c} and \eqref{e}, we obtain
\begin{equation}\label{pxy}
  \displaystyle\lim_{n\rightarrow\infty}\phi(z_n, x_n)=0.
\end{equation}
Moreover, from Lemma \ref{2.1.2}, the inequalities \eqref{fxnzn}, \eqref{pxy} and the convexity of $\|.\|^2$, it is
concluded that
   \begin{align*}
       \phi(z_n, x_{n+1})\leq&\phi(z_n,J^{-1}(\alpha_{n,1}Jx_n +\alpha_{n,2}Jf(x_n)+\alpha_{n,3}Jz_n))\\
       =&\| z_n\|^2-2\langle z_n,\alpha_{n,1}Jx_n +\alpha_{n,2}Jf(x_n)+\alpha_{n,3}Jz_n\rangle \\
       &+\| \alpha_{n,1}Jx_n +\alpha_{n,2}Jf(x_n)+\alpha_{n,3}Jz_n\|^2\\
       \leq& \| z_n\|^2-2\alpha_{n,1}\langle z_n,Jx_n\rangle-2\alpha_{n,2}\langle z_n,Jf(x_n)\rangle-2\alpha_{n,3}\langle z_n,Jz_n\rangle \\
       &+\alpha_{n,1}\|x_n\|^2+\alpha_{n,2}\| f(x_n)\|^2+\alpha_{n,3}\| z_n\|^2\\
       =&\alpha_{n,1}\phi(z_n, x_n)+\alpha_{n,2}\phi(z_n,f(x_n))+\alpha_{n,3}\phi(z_n,z_n)\\
       = &\alpha_{n,1}\phi(z_n, x_n)+\alpha_{n,2}\phi(z_n,f(x_n)) \rightarrow 0 \;\;\; as\;n\rightarrow \infty,
   \end{align*}
then using Lemma \ref{2.1.0}, we get
  \begin{equation}\label{f}
      \displaystyle\lim_{n\rightarrow\infty}\| z_n-x_{n+1}\| = 0.
   \end{equation}
It follows from \eqref{c}, \eqref{e} and \eqref{f} that
 \begin{equation}\label{xx}
      \| x_{n+1}-x_n\|\leq \| x_{n+1}-z_n\|+\| z_n-y_n\|+\|y_n-x_n\| \rightarrow 0 \;\;\;as\; n\rightarrow \infty.
   \end{equation}
So $\{x_n\}$ is a Cauchy sequence, thus  $\{x_n\}$ converges strongly to a point $q\in C$.
It follows from \eqref{c} and \eqref{e} that the sequences $\{y_n\}$ and $\{z_n\}$ are convergent to $q$.
Next, it will be shown that $q\in VI(C,A)$.
Let $B\subset E\times E^*$ be an operator defined as follows:
\begin{equation}\label{tlambdaa}
 B v =\left\{
\begin{array}{lr}
\lambda_n Av + N_{C}v,\qquad v \in C, \\
\emptyset,\qquad \qquad\qquad v \notin C.
\end{array} \right.
\end{equation}
Since $\lambda_nA$ is $\lambda_n\alpha$-inverse strongly monotone, it is followed that $\lambda_nA$ is  $\frac{1}{\lambda_n\alpha}$-Lipschitz continuous, hence
$\lambda_nA$ is hemicontinuous. Therefore, by Lemma \ref{2.60}  $B$ is maximal monotone and $B^{-1}(0)=VI(C,\lambda_nA)=VI(C, A)$.
Let $(\upsilon,w)\in G(B)$ with $w\in B\upsilon=\lambda_n A\upsilon+N_{C}(\upsilon)$.  Then $w-\lambda_n A\upsilon\in N_{C}(\upsilon)$, hence
   \begin{equation}\label{g}
     \langle\upsilon-y_n,w-\lambda_n A\upsilon\rangle\geq 0,
   \end{equation}
because $y_n\in C$. On the other hand by Lemma \ref{2.1.1}, it is concluded that
            $$\langle\upsilon-y_n, J(J^{-1}(Jx_n-\lambda_n Ax_n))-Jy_n\rangle \leq 0,$$
 so
   \begin{equation}\label{h}
     \langle\upsilon-y_n,\lambda_n Ax_n+Jy_n-Jx_n\rangle\geq 0.
   \end{equation}
From \eqref{g}, \eqref{h} and using the definition $A$, we get
   \begin{align}\label{k}
              \langle\upsilon-y_n,w\rangle&\nonumber\\
              \geq&\lambda_n\langle\upsilon-y_n,A\upsilon\rangle-\langle\upsilon-y_n,\lambda_n Ax_n+Jy_n-Jx_n\rangle\nonumber\\
             =&\lambda_n\langle\upsilon-y_n,A\upsilon-Ay_n\rangle + \lambda_n\langle\upsilon-y_n,Ay_n\rangle\nonumber\\
            & -\langle\upsilon-y_n,\lambda_n Ax_n+Jy_n-Jx_n\rangle\nonumber\\
             \geq&\lambda_n\langle\upsilon-y_n,Ay_n-Ax_n\rangle-\langle\upsilon-y_n,Jy_n-Jx_n\rangle\nonumber\\
            \geq& -\lambda_n\|\upsilon-y_n\|\| Ax_n-Ay_n\|-\|\upsilon-y_n\|\| Jx_n-Jy_n\|.
   \end{align}
Hence,  using uniformly norm-to-norm continuity of $J$ on bounded sets and \eqref{c},
 $\langle\upsilon-y_n, w\rangle \geq 0$ as $n\rightarrow \infty$, i.e. $\langle\upsilon-q,w\rangle \geq 0$. Therefore $\langle q-\upsilon,0-w\rangle \geq 0$, it
 is concluded from Lemma \ref{2.60} that $q\in B^{-1}(0)=VI(C,A)$, because $B$ is a maximal monotone operator.

Next, we show that $q\in F(f)$. From \eqref{c}, \eqref{e} and \eqref{fxz},  we have
   \begin{equation}\label{as}
         \| f(x_n)-x_n\| \leq \| f(x_n)-z_n\|+\| z_n-y_n\|+\|y_n-x_n\|\rightarrow 0\;\;\;as\;n\rightarrow \infty,
  \end{equation}
and since $x_n\rightharpoonup q$, then $q$ is an asymptotic fixed point of $f$.
Moreover, $\hat{F}(f)=F(f)$, because $f$ is a relatively nonexpansive mapping, hence $q\in F(f)$.
 Therefore, $\Pi_{VI(C,A)}of(q)=\Pi_{VI(C,A)}(q)=q$.
\end{proof}
  \begin{theorem}\label{3.2}
    Suppose that $\tilde{F}$ is a bifunction from $C\times C$ to $\mathbb{R}$ which satisfies the conditions $(A_{1})-(A_{4})$. Let f be a relatively nonexpansive
    self-mapping on $C$ and $\| Ax\|\leq \| Ax-Au\|$ for all $x\in C$ and $u\in \Omega:= VI(C, A)\cap GEP(\tilde{F}, A) \cap F(f)$. Let $x_0$ be an arbitrary point in $C$ and $\{x_n\}$ be a sequence
    generated by
  \begin{equation}\label{algo2}
    \left\{
    \begin{array}{lr}
       u_n\in C\;\; s.t\;\; \tilde{F}(u_n,y)+\langle Au_n,y-u_n\rangle+\frac{1}{r_n}\langle y-u_n,Ju_n-Jx_n\rangle\geq0, \\
       w_n=\Pi_{C}J^{-1}(Ju_n-\lambda_nAu_n),\\
       y_n=\Pi_CJ^{-1}(Jx_n-\lambda_nAx_n),\\
       C_n=\{v\in C : \phi (v, w_n)\leq \phi (v, x_n) \},\\
       z_n=\Pi_{C_n}J^{-1}(Jy_n-\lambda_nAy_n),\\
       x_{n+1}=\Pi_CJ^{-1}(\alpha_{n,1}Jx_n+\alpha_{n,2}Jf(x_n)+\alpha_{n,3}Jz_n+\alpha_{n,4}Jw_n).
    \end{array} \right.
   \end{equation}
where $\{\lambda_n\}\subseteq [0, 1]$ such that $\displaystyle\lim_{n\rightarrow \infty}\lambda_n=0$, and $\{r_n\}\subset [a,\infty)$ for some $a>0$. If
$\{\alpha_{n,i}\}\subset[0,1]$ for $i=1, 2, 3, 4$ such that $\sum_{i=1}^{4}\alpha_{n,i}=1$ and $\displaystyle\liminf_{n\to \infty}\alpha_{n,2}\alpha_{n,3}>0$ and $\displaystyle\liminf_{n\to \infty}\alpha_{n,2}\alpha_{n,4}>0$. Then the sequence $\{x_n\}$ generated by \eqref{algo2} converges strongly to $q=\Pi_{VI(C,A)\cap GEP(\tilde{F},A)}\circ f(q)$.
  \end{theorem}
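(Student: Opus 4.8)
The plan is to follow the architecture of the proof of Theorem~\ref{jmen}, the new ingredients being the equilibrium resolvent $u_n$ (handled through Lemma~\ref{2.7}), the extra branch $w_n$ (handled by the same inverse-strong-monotonicity estimate already used for $y_n$ and $z_n$), and the sets $C_n$ (handled by a routine half-space argument). Fix $\hat u\in\Omega$. I would first record that each $C_n$ is nonempty, closed and convex: the function $v\mapsto\phi(v,w_n)-\phi(v,x_n)=2\langle v,Jx_n-Jw_n\rangle+\|w_n\|^2-\|x_n\|^2$ is affine, so $C_n$ is the intersection of $C$ with a closed half-space, and $\hat u\in C_n$ once we know $\phi(\hat u,w_n)\le\phi(\hat u,x_n)$ (established below); hence the generalized projections $\Pi_{C_n}$ and $\Pi_C$ appearing in \eqref{algo2} are well defined.

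Second, the monotonicity estimates. Exactly as in \eqref{one}--\eqref{two}, using Lemma~\ref{2.2}, the $\alpha$-inverse strong monotonicity of $A$, the hypothesis $\|Ax\|\le\|Ax-Au\|$, and $0<\lambda_n<\tfrac{c^2\alpha}{2}$, one gets $\phi(\hat u,J^{-1}(Ju_n-\lambda_nAu_n))\le\phi(\hat u,u_n)$ and $\phi(\hat u,J^{-1}(Jy_n-\lambda_nAy_n))\le\phi(\hat u,y_n)\le\phi(\hat u,x_n)$. Since $u_n=K_{r_n}x_n$ by Lemma~\ref{2.7}, part~(5) of that lemma gives $\phi(\hat u,u_n)+\phi(u_n,x_n)\le\phi(\hat u,x_n)$, and Lemma~\ref{2.1.2} applied to $\Pi_C$ and $\Pi_{C_n}$ yields $\phi(\hat u,w_n)\le\phi(\hat u,u_n)\le\phi(\hat u,x_n)$ and $\phi(\hat u,z_n)\le\phi(\hat u,y_n)\le\phi(\hat u,x_n)$. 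Substituting these into the four-term iterate, using the convexity of $\|\cdot\|^2$ as in \eqref{decreas} together with the relative nonexpansiveness of $f$, gives $\phi(\hat u,x_{n+1})\le\phi(\hat u,x_n)$. Hence $\{\phi(\hat u,x_n)\}$ converges, and by \eqref{phi1} and the above estimates the sequences $\{x_n\},\{u_n\},\{w_n\},\{y_n\},\{z_n\},\{f(x_n)\}$ are bounded.

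Third, asymptotic regularity. Since $\liminf_n\alpha_{n,2}\alpha_{n,3}>0$ and $\liminf_n\alpha_{n,2}\alpha_{n,4}>0$ force $\liminf_n\alpha_{n,3}>0$ and $\liminf_n\alpha_{n,4}>0$, and $\phi(\hat u,x_n)-\phi(\hat u,x_{n+1})\to0$, the four-term estimate with only $\phi(\hat u,z_n)$ (resp. $\phi(\hat u,w_n)$) left unbounded gives $\phi(\hat u,x_n)-\phi(\hat u,z_n)\to0$ and $\phi(\hat u,x_n)-\phi(\hat u,w_n)\to0$, hence also $\phi(\hat u,x_n)-\phi(\hat u,u_n)\to0$ because $\phi(\hat u,w_n)\le\phi(\hat u,u_n)\le\phi(\hat u,x_n)$. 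Combining this with Lemma~\ref{2.1.2}, Lemma~\ref{2.1.0}, Lemma~\ref{2.7}(5), and $\lambda_n\to0$, exactly along the lines of \eqref{a}--\eqref{f}, I obtain $\phi(u_n,x_n)\to0$, $\phi(x_n,y_n)\to0$, $\phi\big(w_n,J^{-1}(Ju_n-\lambda_nAu_n)\big)\to0$, $\phi\big(z_n,J^{-1}(Jy_n-\lambda_nAy_n)\big)\to0$, and hence $\|u_n-x_n\|\to0$, $\|y_n-x_n\|\to0$, $\|w_n-x_n\|\to0$, $\|z_n-x_n\|\to0$. Applying Lemma~\ref{2.4} to the pair $\big(Jf(x_n),Jz_n\big)$ and to the pair $\big(Jf(x_n),Jw_n\big)$ inside the four-term convex combination produces $\alpha_{n,2}\alpha_{n,3}g_1(\|Jf(x_n)-Jz_n\|)\to0$ and $\alpha_{n,2}\alpha_{n,4}g_2(\|Jf(x_n)-Jw_n\|)\to0$, so the liminf conditions and uniform norm-to-norm continuity of $J^{-1}$ on bounded sets give $\|f(x_n)-z_n\|\to0$ and $\|f(x_n)-w_n\|\to0$, and therefore $\|f(x_n)-x_n\|\to0$. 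Estimating $\phi(z_n,x_{n+1})$ through the iterate as in Theorem~\ref{jmen} and invoking Lemma~\ref{2.1.0} gives $\|x_{n+1}-z_n\|\to0$, whence $\|x_{n+1}-x_n\|\to0$; following the paper, $\{x_n\}$ then converges strongly to some $q\in C$, and $u_n,w_n,y_n,z_n,f(x_n)\to q$.

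Finally, the identification of $q$. The inclusion $q\in VI(C,A)$ is obtained verbatim from the last part of the proof of Theorem~\ref{jmen} (introduce $Bv=\lambda_nAv+N_Cv$ on $C$, maximal monotone with $B^{-1}(0)=VI(C,A)$ by Lemma~\ref{2.60}; apply Lemma~\ref{2.1.1} to $y_n$ to get $\langle\upsilon-y_n,\lambda_nAx_n+Jy_n-Jx_n\rangle\ge0$; pass to the limit using $\|x_n-y_n\|\to0$ and uniform continuity of $J$). For $q\in GEP(\tilde F,A)$: from the inequality defining $u_n$ and monotonicity (A2), $\tilde F(y,u_n)\le\langle Au_n,y-u_n\rangle+\tfrac{1}{r_n}\langle y-u_n,Ju_n-Jx_n\rangle$ for all $y\in C$; since $r_n\ge a>0$, $\|Ju_n-Jx_n\|\to0$ (uniform continuity of $J$ and $\|u_n-x_n\|\to0$), and $A$ is $\tfrac1\alpha$-Lipschitz, letting $n\to\infty$ and using the lower semicontinuity in (A4) gives $\tilde F(y,q)\le\langle Aq,y-q\rangle$ for all $y\in C$; the standard argument with $y_t=ty+(1-t)q$ using (A1), (A4) and (A3) then upgrades this to $\tilde F(q,y)+\langle Aq,y-q\rangle\ge0$ for all $y\in C$, i.e. $q\in GEP(\tilde F,A)$. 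Since $\|f(x_n)-x_n\|\to0$ and $x_n\rightharpoonup q$, we have $q\in\hat F(f)=F(f)$, so $f(q)=q$; combined with $q\in VI(C,A)\cap GEP(\tilde F,A)$ this yields $\Pi_{VI(C,A)\cap GEP(\tilde F,A)}\circ f(q)=\Pi_{VI(C,A)\cap GEP(\tilde F,A)}(q)=q$, as claimed. The main obstacle is the third step: passing from the single decreasing quantity $\phi(\hat u,x_n)$, through the moving sets $C_n$ and the extra branch $w_n$, to the two separate conclusions $\|x_n-z_n\|\to0$ and $\|x_n-w_n\|\to0$ — this is exactly where the observations $\liminf\alpha_{n,3}>0$, $\liminf\alpha_{n,4}>0$ and Lemma~\ref{2.1.2} are indispensable — together with the weak-limit identification $q\in GEP(\tilde F,A)$; everything else runs parallel to Theorem~\ref{jmen}.
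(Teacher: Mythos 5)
Your proposal is correct and follows the same overall architecture as the paper's proof of Theorem \ref{3.2}: closedness and convexity of $C_n$ plus well-definedness via \eqref{un<xn}--\eqref{wn<xn}, the decreasing property of $\phi(\hat{u},x_n)$ and boundedness, the Lemma \ref{2.4} argument giving $\|Jf(x_n)-Jz_n\|\to 0$ and $\|Jf(x_n)-Jw_n\|\to 0$, the passage through $\phi(z_n,x_{n+1})\to 0$ to $\|x_{n+1}-x_n\|\to 0$ and strong convergence, and the three identifications $q\in VI(C,A)$ (Lemma \ref{2.60}, as in Theorem \ref{jmen}), $q\in GEP(\tilde{F},A)$ (via $r_n\geq a$, $\|Ju_n-Jx_n\|\to 0$ and conditions (A1)--(A4)), and $q\in F(f)$. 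The one place where you genuinely deviate is the derivation of $\|x_n-z_n\|\to 0$, $\|x_n-w_n\|\to 0$ and $\|u_n-x_n\|\to 0$: the paper first gets $\phi(z_n,x_n)\to 0$ from the $g_1$-estimate, then uses $z_n\in C_n$ to obtain $\phi(z_n,w_n)\to 0$ and hence \eqref{xnwn}, and finally invokes Lemma \ref{2.5} together with an explicit norm computation to reach $\|u_n-x_n\|\to 0$ and \eqref{jujx}; you instead note that $\liminf_{n}\alpha_{n,3}>0$ and $\liminf_{n}\alpha_{n,4}>0$ turn the convex-combination inequality into $\phi(\hat{u},x_n)-\phi(\hat{u},z_n)\to 0$ and $\phi(\hat{u},x_n)-\phi(\hat{u},w_n)\to 0$, and then apply the three-point inequality of Lemma \ref{2.1.2} at the point $\hat{u}$ together with Lemma \ref{2.7}(5), Lemma \ref{2.1.0} and $\lambda_n\to 0$. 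This buys two things: Lemma \ref{2.5} becomes unnecessary, and, more importantly, your applications of Lemma \ref{2.1.2} for $\Pi_{C_n}$ are made only at $\hat{u}\in\Omega\subseteq C_n$, whereas the paper's assertion that \eqref{e} carries over to the algorithm \eqref{algo2} implicitly applies that lemma at $y_n$, which is not known to belong to $C_n$; your variant therefore closes that small gap. Finally, your last step (deducing that $\{x_n\}$ is Cauchy from $\|x_{n+1}-x_n\|\to 0$) is taken over verbatim from the paper, so it stands or falls with the paper's own argument rather than being an additional weakness of your write-up.
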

  \begin{proof}
Clearly, by part $(i)$ of lemma \ref{2.7}, the sequence $\{u_n\}$ exists.   Now, it will be checked that $C_n$ is closed and convex for each $n \geq 1$. Obviously, by the definition of $C_n$, it is clear that $C_n$ is closed. Applying the definition of $\phi$, the inequality  $\phi (v, w_n)\leq \phi (v, x_n) $ is equivalent to
    \begin{equation}\label{convex}
     2\langle v, Jx_n- Jw_n\rangle\leq \|x_n\|^2 -\|w_n\|^2.
    \end{equation}
It is obvious from \eqref{convex} that $C_n$ is convex for all $n\geq 1$.

Now, it will be verified that $\{x_n\}$ is well defined. Suppose that $p\in \Omega$. By Lemma \ref{2.7}, it may be put $u_n= K_{r_n}x_n$. So, by the
condition (5) of Lemma \ref{2.7}, it is concluded that
   \begin{equation}\label{un<xn}
     \phi(p, u_n)= \phi(p,  K_{r_n}x_n)\leq \phi(p, x_n).
   \end{equation}
   Moreover,
   from Lemma \ref{2.1.2} and the inequality \eqref{vq}, it follows that
    \begin{align}\label{ineq41}
      \phi(p,  w_n)=& \phi(p,\Pi_{C}J^{-1}(Ju_n-\lambda_nAu_n))\nonumber \\
      \leq & \phi(p,J^{-1}(Ju_n-\lambda_nAu_n))\nonumber \\
      \leq & V(p,Ju_n-\lambda_nAu_n)\nonumber\\
      \leq &V(p,Ju_n)-2\langle J^{-1}(Ju_n-\lambda_nAu_n)-p,\lambda_nAu_n\rangle\nonumber\\
      =&\phi(p,u_n)-2\lambda_n\langle u_n-p,Au_n\rangle\nonumber\\&+2\langle J^{-1}(Ju_n-\lambda_nAu_n)-J^{-1}(Ju_n),-\lambda_nAu_n\rangle,
    \end{align}
since $A$ is an $\alpha-$inverse strongly monotone operator, it is proved that
    \begin{align}\label{ineq51}
      -2\lambda_n\langle u_n-p, & Au_n\rangle \nonumber\\
      = & -2\lambda_n\langle u_n-p,Au_n-Ap\rangle -2\lambda_n\langle u_n-p,Ap\rangle\nonumber \\
      \leq & -2\lambda_n\alpha\| Au_n-Ap\|^2.
     \end{align}
From Lemma \ref{2.2} and the condition $\| Ax\|\leq \| Ax-Ap\|$ for all $x\in C$, it is demonstrated that
    \begin{align}\label{ineq61}
      2\langle J^{-1}(Ju_n-\lambda_n&Au_n)- J^{-1}(Ju_n),  -\lambda_nAu_n\rangle\nonumber\\
      \leq & 2\|J^{-1}(Ju_n-\lambda_nAu_n)-J^{-1}(Ju_n)\|\| \lambda_nAu_n\|\nonumber \\
      = & \frac{4\lambda_n^2}{c^2}\| Au_n\|^2\nonumber\\
      \leq& \frac{4\lambda_n^2}{c^2}\| Au_n-Ap\|^2.
    \end{align}
By substituting \eqref{ineq51} and \eqref{ineq61} in \eqref{ineq41} and the assumption $0<\lambda_n < \frac{c^2\alpha}{2}$, it is implied that
    \begin{equation}\label{pwpu}
       \phi(p,  w_n)
      \leq \phi(p, u_n)+2\lambda_n(\frac{2}{c^2}\lambda_n-\alpha)\|  Au_n-Ap\|^2\leq \phi(p, u_n).
      \end{equation}
From \eqref{un<xn} and \eqref{pwpu}, it is evident that
     \begin{equation}\label{wn<xn}
       \phi(p,  w_n)\leq \phi(p, x_n).
     \end{equation}
Then $p\in C_n$ and hence $\{x_n\}$ is well defined.

 Let $\Omega\neq\emptyset$ and $\hat{u}\in \Omega$. From Lemma \ref{2.1.2}, the convexity of $\| . \|^2$ and the relatively nonexpansiveness of $f$, it follows
     \begin{align*}
        \phi(\hat{u},   x_{n+1}&) \\
       \leq &\phi(\hat{u}, J^{-1}(\alpha_{n,1}Jx_n+\alpha_{n,2}Jf(x_n)+\alpha_{n,3}Jz_n+\alpha_{n,4}Jw_n))\\
       = & \| \hat{u}\|^2-2\langle \hat{u},\alpha_{n,1}Jx_n+\alpha_{n,2}Jf(x_n)+\alpha_{n,3}Jz_n+\alpha_{n,4}Jw_n\rangle\\
        &+\| \alpha_{n,1}Jx_n+\alpha_{n,2}Jf(x_n)+\alpha_{n,3}Jz_n+\alpha_{n,4}Jw_n\|^2 \\
       \leq &  \| \hat{u}\|^2-2\alpha_{n,1}\langle\hat{u}, Jx_n\rangle-2\alpha_{n,2}\langle \hat{u},Jf(x_n)\rangle
        -2\alpha_{n,3}\langle \hat{u}, Jz_n\rangle-2\alpha_{n,4}\langle \hat{u}, Jw_n\rangle\\
        &+\alpha_{n,1}\|x_n\|^2+\alpha_{n,2} \| f(x_n)\|^2+\alpha_{n,3} \| z_n\|^2+\alpha_{n,4}\| w_n\|^2\\
       = &\alpha_{n,1}\phi(\hat{u}, x_n)+\alpha_{n,2} \phi(\hat{u}, f(x_n))+\alpha_{n,3}\phi(\hat{u}, z_n)+\alpha_{n,4}\phi(\hat{u}, w_n)\\
       \leq &\alpha_{n,1}\phi(\hat{u}, x_n)+\alpha_{n,2} \phi(\hat{u}, x_n)+\alpha_{n,3}\phi(\hat{u}, z_n)+\alpha_{n,4}\phi(\hat{u}, w_n)\\
       =&(\alpha_{n,1}+\alpha_{n,2}) \phi(\hat{u}, x_n)+\alpha_{n,3}\phi(\hat{u}, z_n)+\alpha_{n,4}\phi(\hat{u}, w_n).
      \end{align*}
Similarly, using Lemma \ref{2.1.2}, the inequality \eqref{three} holds for the algorithm \eqref{algo2}, too. Hence, from \eqref{three}  and \eqref{wn<xn}, it is implied that
      \begin{equation}\label{xn,xn+1}
        \phi(\hat{u}, x_{n+1})\leq \phi(\hat{u}, x_n).
      \end{equation}
It is concluded that $\{\phi(\hat{u}, x_n)\}$ is decreasing, so from the boundedness of the sequence $\{\phi(\hat{u}, x_n)\}$,  $\displaystyle\lim_{n\rightarrow \infty }\phi(\hat{u}, x_n)$ exists. Also from \eqref{phi1}, $\{x_n\}$ is bounded and hence from \eqref{un<xn} and the relatively nonexpansiveness of $f$, $\{u_n\}$ and $\{f(x_n)\}$ are bounded.
Similarly, using Lemma \ref{2.1.2}, the inequalities \eqref{c} and \eqref{e} hold for the algorithm \eqref{algo2}.
Hence, it is concluded from  \eqref{c} and \eqref{e} that the sequences $\{y_n\}$ and $\{z_n\}$ are bounded. Now, let $r_1=\sup\{\|z_n\|, \|f(x_n)\|\}$, by Lemma \ref{2.4}, there exists a continuous strictly increasing and convex function $ g_{1}:[0,2r_{1}]\longrightarrow [0,\infty )$ with $g_{1}(0)=0$. We get
 \begin{align*}
        \phi(\hat{u},   x_{n+1}&) \\
       \leq &\phi(\hat{u}, J^{-1}(\alpha_{n,1}Jx_n+\alpha_{n,2}Jf(x_n)+\alpha_{n,3}Jz_n+\alpha_{n,4}Jw_n))\\
       = & \| \hat{u}\|^2-2\langle \hat{u},\alpha_{n,1}Jx_n+\alpha_{n,2}Jf(x_n)+\alpha_{n,3}Jz_n+\alpha_{n,4}Jw_n\rangle\\
        &+\| \alpha_{n,1}Jx_n+\alpha_{n,2}Jf(x_n)+\alpha_{n,3}Jz_n+\alpha_{n,4}Jw_n\|^2 \\
       \leq &  \| \hat{u}\|^2-2\alpha_{n,1}\langle\hat{u}, Jx_n\rangle-2\alpha_{n,2}\langle \hat{u},Jf(x_n)\rangle
        -2\alpha_{n,3}\langle \hat{u}, Jz_n\rangle\\
        &-2\alpha_{n,4}\langle \hat{u}, Jw_n\rangle+\alpha_{n,1}\|x_n\|^2+\alpha_{n,2} \| f(x_n)\|^2+\alpha_{n,3} \| z_n\|^2\\
        &+\alpha_{n,4}\| w_n\|^2-\alpha_{n,2}\alpha_{n,3}g_1(\|Jf(x_n)-Jz_n\|)\\
       = &\alpha_{n,1}\phi(\hat{u}, x_n)+\alpha_{n,2} \phi(\hat{u}, f(x_n))+\alpha_{n,3}\phi(\hat{u}, z_n)+\alpha_{n,4}\phi(\hat{u}, w_n)\\
       &-\alpha_{n,2}\alpha_{n,3}g_1(\|Jf(x_n)-Jz_n\|)\\
       \leq &\alpha_{n,1}\phi(\hat{u}, x_n)+\alpha_{n,2} \phi(\hat{u}, x_n)+\alpha_{n,3}\phi(\hat{u}, z_n)+\alpha_{n,4}\phi(\hat{u}, w_n)\\
       &-\alpha_{n,2}\alpha_{n,3}g_1(\|Jf(x_n)-Jz_n\|)\\
       =&(\alpha_{n,1}+\alpha_{n,2}) \phi(\hat{u}, x_n)+\alpha_{n,3}\phi(\hat{u}, z_n)+\alpha_{n,4}\phi(\hat{u}, w_n)\\
       &-\alpha_{n,2}\alpha_{n,3}g_1(\|Jf(x_n)-Jz_n\|).
      \end{align*}
      Now from \eqref{three} and \eqref{wn<xn}, we have
      \begin{equation}\label{ara}
        \phi(\hat{u},   x_{n+1})\leq \phi(\hat{u}, x_n)-\alpha_{n,2}\alpha_{n,3}g_1(\|Jf(x_n)-Jz_n\|).
      \end{equation}
      So
      \begin{equation*}
        \alpha_{n,2}\alpha_{n,3}g_1(\|Jf(x_n)-Jz_n\|)\leq \phi(\hat{u}, x_n)-\phi(\hat{u},   x_{n+1}).
      \end{equation*}
      Since $\liminf_{n\rightarrow\infty}\alpha_{n,2}\alpha_{n,3}>0$, using the method as in the proof of Theorem \ref{jmen}, we conclude that the inequality \eqref{fxnzn} and \eqref{pxy} hold.

       By Lemma \ref{2.1.2} and convexity of $\| .\|^2$, it is obtained that
      \begin{align*}
        \phi(z_n, x_{n+1}&)\nonumber\\
         \leq & \phi(z_n,J^{-1}(\alpha_{n,1}Jx_n+\alpha_{n,2}Jf(x_n)+\alpha_{n,3}Jz_n+\alpha_{n,4}Jw_n))\nonumber \\
         =& \| z_n\|^2-2\langle z_n, \alpha_{n,1}Jx_n+\alpha_{n,2}Jf(x_n)+\alpha_{n,3}Jz_n+\alpha_{n,4}Jw_n\rangle\nonumber \\
          &+ \| \alpha_{n,1}Jx_n+\alpha_{n,2}Jf(x_n)+\alpha_{n,3}Jz_n+\alpha_{n,4}Jw_n\|^2\nonumber \\
         \leq &\| z_n\|^2-2\alpha_{n,1}\langle z_n, Jx_n\rangle-2\alpha_{n,2}\langle z_n, Jf(x_n)\rangle-2\alpha_{n,3} \langle z_n,Jz_n\rangle\nonumber\\
         &-2\alpha_{n,4} \langle z_n,Jw_n\rangle+\alpha_{n,1}\|x_n\|^2+\alpha_{n,2}\| f(x_n)\|^2+\alpha_{n,3}\|z_n\|^2\nonumber\\
         &+\alpha_{n,4}\|w_n\|^2\nonumber\\
         =&\alpha_{n,1}\phi(z_n, x_n)+\alpha_{n,2}\phi(z_n, f(x_n))+\alpha_{n,3}\phi(z_n,z_n)+\alpha_{n,4}\phi(z_n,w_n)\nonumber\\
         \leq&(\alpha_{n,1}+\alpha_{n,4})\phi(z_n, x_n)+\alpha_{n,2}\phi(z_n, f(x_n)),
      \end{align*}
because $z_n\in C_n$.
Using \eqref{fxnzn}, \eqref{pxy} and taking the limit in the above as $n\rightarrow\infty$, it is deduced that
\begin{equation*}
        \phi(z_n,x_{n+1})\rightarrow 0.
      \end{equation*}
Then, from Lemma \ref{2.1.0}, we have
\begin{equation*}
  \displaystyle\lim_{n\rightarrow\infty}\|x_{n+1}-z_n\|=0,
\end{equation*}
therefore, it follows from \eqref{c}, \eqref{e} that
      \begin{align*}
        \| x_{n+1}-x_n\|\leq \|  x_{n+1}- z_n\|+\| z_n-y_n\|+\|y_n-x_n\|\rightarrow 0\;\;as\;\; n\rightarrow \infty,
      \end{align*}
hence,  $\{x_n\}$ is a cauchy sequence. Thus, $\{x_n\}$ converges strongly to a point $q\in C$. Obviously, the relations \eqref{tlambdaa}, \eqref{g},
\eqref{h} and \eqref{k} are valid for the algorithm \eqref{algo2}.  Hence, as in the proof of Theorem \ref{jmen}, it is understood that $q \in V
I(C, A)$.

Now, it will be proved that $q\in GEP(\tilde{F}, A)$.
From \eqref{pxy} and the fact that $z_n\in C_n$, it is induced that $\phi(z_n, w_n)\rightarrow0$ as $n\rightarrow \infty$. Therefore, by  Lemma \ref{2.1.0}, we
have
   \begin{equation}\label{ynwn}
     \displaystyle\lim_{n\rightarrow\infty}\|z_n - w_n\|=0.
   \end{equation}
From \eqref{c}, \eqref{e} and \eqref{ynwn}, it is evident that
     \begin{equation}\label{xnwn}
       \displaystyle\lim_{n\rightarrow\infty}\|x_n-w_n\|=0.
     \end{equation}
Assume that $r_{2}=\sup\{\| u_n\| ,\| x_n\|\}$. From Lemma \ref{2.5}, there exists  a continuous, convex and strictly increasing function
$g_{2}:[0,2r_{2}]\longrightarrow [0,\infty)$ such that $g_{2}(0)=0$ and
    \begin{align}\label{g<phi}
      g_{2}(\| u_n-x_n\|)\leq\phi( u_n,x_n).
    \end{align}
Since $ u_n=K_{r_n}(x_n)$ and by using \eqref{pwpu}, \eqref{g<phi} and condition $(5)$ of Lemma \ref{2.7}, it is implied that
    \begin{align*}
      g_{2}(\| u_n-x_n\|)\leq&\phi( u_n,x_n) \\
      \leq & \phi( u,x_n)-\phi( u,u_n) \\
      \leq & \phi( u,x_n)-\phi( u,w_n)  \\
      = & \| u\|^2-2\langle u,Jx_n\rangle+\| x_n\|^2-\| u\|^2+2\langle u,Jw_n\rangle-\| w_n\|^2\\
      = & \| x_n\|^2-\| w_n\|^2+2\langle u,Jw_n-Jx_n\rangle \\
      \leq & \| x_n\|^2-\| w_n\|^2+2\| u\|\| Jw_n-Jx_n\| \\
      \leq & (\| x_n-w_n\|+\| w_n\|)^2-\| w_n\|^2+2\| u\|\| Jw_n-Jx_n\| \\
      \leq & \| x_n-w_n\|^2+2\| w_n\|\| x_n-w_n\|+2\| u\|\| Jw_n-Jx_n\|,
      \end{align*}
from \eqref{xnwn} and the condition uniformly norm-to-norm continuity of $J$ on bounded sets, we have $\displaystyle\lim_{n\rightarrow\infty} g_{2}(\| u_n-x_n\|)=0$. Then it is followed from the conditions that $g_{2}$ is a strictly increasing and continuous function that $\| u_n-x_n\|\rightarrow0$ as $n\rightarrow\infty$. Then
      \begin{equation}\label{jujx}
        \lim_{n\rightarrow\infty}\| Ju_n-Jx_n\|\rightarrow0.
      \end{equation}
Since $u_n=K_{r_n}x_n$, we concluded that
       \begin{equation}\label{3}
         \tilde{F}(u_n,y)+\langle Au_n,y-u_n\rangle+\frac{1}{r_n}\langle y-u_n,Ju_n-Jx_n\rangle\geq0,
       \end{equation}
for all $y\in C$. From the condition $(A_{2})$, we have
       \begin{equation}\label{4}
         \tilde{F}(y,u_n)\leq - \tilde{F}(u_n,y)\;\; \text{for}\;\text{all}\; y\in C.
       \end{equation}
From \eqref{3} and \eqref{4}, it is implied that
        \begin{align*}
          \tilde{F}(y,u_n)\leq - \tilde{F}(u_n,y)\leq \langle Au_n,y-u_n\rangle+\frac{1}{r_n}\langle y-u_n,Ju_n-Jx_n\rangle,
        \end{align*}
for all $y\in C$. Letting $n\rightarrow \infty$, using condition $(A_{4})$ and by \eqref{jujx}, it can be concluded that
        \begin{equation}\label{FAq}
          \tilde{F}(y,q)\leq \langle Aq,y-q\rangle\;\;\;\text{for}\;\text{all}\; y\in C.
        \end{equation}
Put $y_{\lambda}=\lambda y+(1-\lambda)q$ for all $y\in C$ and $\lambda\in (0,1)$. Now from the conditions $(A_{1})$, $(A_4)$, the inequality \eqref{FAq}, the monotonicity of $A$ and the convexity of
$\tilde{F}$, we have
        \begin{align*}
          0=&\tilde{F}(y_{\lambda},y_{\lambda})+\langle Ay_{\lambda},y_{\lambda}-y_{\lambda}\rangle  \\
          \leq & \lambda \tilde{F}(y_{\lambda},y)+(1-\lambda)\tilde{F}(y_{\lambda},q)+\langle Ay_{\lambda},\lambda y+(1-\lambda)q-y_{\lambda}\rangle \\
          = & \lambda \tilde{F}(y_{\lambda},y)+(1-\lambda)\tilde{F}(y_{\lambda},q)+\lambda\langle Ay_{\lambda},y-y_{\lambda}\rangle
          +(1-\lambda)\langle Ay_{\lambda},q-y_{\lambda}\rangle\\
          =& \lambda \tilde{F}(y_{\lambda},y)+(1-\lambda)\tilde{F}(y_{\lambda},q)+\lambda\langle Ay_{\lambda},y-y_{\lambda}\rangle
          +(1-\lambda)\langle Ay_{\lambda}-Aq,q-y_{\lambda}\rangle \\
          &+(1-\lambda)\langle Aq, q-y_{\lambda}\rangle\\
          \leq &\lambda \tilde{F}(y_{\lambda},y)+\lambda\langle Ay_{\lambda},y-y_{\lambda}\rangle,
        \end{align*}
for all $y\in C$.
So $0\leq \tilde{F}(y_{\lambda},y)+\langle Ay_{\lambda},y-y_{\lambda}\rangle$. Now by taking limit as $\lambda\rightarrow 0$ and by using the condition $(A_{3})$, it is followed that $0\leq\tilde{F}(q,y)+\langle Aq,y-q\rangle$ for all $y\in C$. Therefore
 $q\in GEP(\tilde{F}, A)$.

 Now, we show that $q \in F(f)$. Let $r_3=\sup\{\| w_{n}\| ,\| f(x_{n})\|\}$,
hence, in a similar way with \eqref{ara}, there exists a continuous, convex and strictly increasing function $g_3:[0,2r_3]\longrightarrow [0,\infty)$ whit $g_3(0)=0$, such that
     \begin{equation*}
       \phi(\hat{u},   x_{n+1})\leq\phi(\hat{u}, x_n) -\alpha_{n,2}\alpha_{n,4}g_3(\|Jf(x_n)-Jw_n\|),
     \end{equation*}
hence
      \begin{equation*}
        \alpha_{n,2}\alpha_{n,4}g_3(\| Jf(x_{n})-Jw_{n}\|)\leq \phi(\hat{u}, x_{n+1})-\phi(\hat{u}, x_{n}).
       \end{equation*}
Taking the limit as $n\longrightarrow \infty$ and using our assumptions, we obtain
       \begin{align*}
         \lim_{n\rightarrow \infty}g_3(\| Jf(x_{n})-Jw_{n}\|)=0,
       \end{align*}
since $g_1$ is a continuous function, it is easy to see that
      \begin{align}
         \lim_{n\rightarrow \infty}\| Jf(x_{n})-Jw_{n}\|=0.
      \end{align}
Therefore
      \begin{equation}\label{fx-z}
         \lim_{n\rightarrow \infty}\| f(x_{n})-w_{n}\|=\lim_{n\rightarrow \infty}\|J^{-1}(Jf(x_{n}))-J^{-1}(Jw_{n})\|=0,
      \end{equation}
because $J^{-1}$ is uniformly norm-to-norm continuous on bounded sets. From \eqref{xnwn} and \eqref{fx-z}, it is concluded that
      \begin{equation*}
        \|f(x_n) -x_n\| \leq \|f(x_n)-w_n\| + \|w_n - x_n\| \rightarrow 0 \;\;\; as \;n\rightarrow \infty,
      \end{equation*}
and since $x_n\rightharpoonup q$, then $q\in \hat{F(f)}= F(f)$. Hence $\{x_n\}$ is strongly convergent to a point $q\in \Omega$, and also we have $q=\Pi_{VI(C,A)\cap  GEP(\tilde{F},A)}\circ f(q)$.
\end{proof}
  \section{Numerical example}
Now, some examples are given to illustrate Theorem \ref{3.2}. Then the behaviors of the sequences $\{x_n\}, \{y_n\}, \{z_n\} $ and $\{w_n\}$ are investigated which were generated by the algorithm \eqref{algo2}.
    \begin{example}\label{ex}
      Let $E=\mathbb{R}$, $C=[-5,5]$, $A=I$, $\lambda_n=\frac{1}{n}$, $c=1$, $\alpha=1$ and $f$ be a self-mapping on $C$ defined by  $f(x)=\frac{x}{3}$ for all $x \in C$. Consider the function $\tilde{F}: C\times C\rightarrow \mathbb{R}$ defined by
     \begin{equation*}
       \tilde{F}(u,y):=16y^2+9uy-25u^2,
     \end{equation*}
for all $u$, $y \in C$.
We see that $f$ satisfies in the conditions (A1) - (A4) as follows:\\
(A1) $\tilde{F}(u,u)=16u^2+9u^2-25u^2=0$ for all $u\in [-5,5]$,\\
(A2) $\tilde{F}$ is monotone, because $\tilde{F}(u,y)+\tilde{F}(y,u)=-9(u-y)^2\leq 0$ for all $y,u\in [-5,5]$,\\
(A3) for each $u, y, z\in [-5,5],$
      \begin{align*}
        \lim_{\lambda \to 0}\tilde{F}(\lambda z+(1&-\lambda)u,y)\\
        =&\lim_{\lambda \to 0}(16y^2+9(\lambda z+(1-\lambda)u)y-25(\lambda z+(1-\lambda)u)^2) \\
        =& 16y^2+9uy-25u^2\\
        =&\tilde{F}(u,y).
      \end{align*}
(A4) Obviously, for each $u\in [-5, 5]$, $y\rightarrow (16y^2+9uy-25u^2)$ is convex and lower semicontinuous.\\
Let $u \in K_rx$, hence, it is concluded from Lemma \ref{2.7} that
\begin{align*}
   \tilde{F}(u, y)+ \langle Au, y-u\rangle+\frac{1}{r}\langle y-u, Ju-Jx\rangle \geq 0,
\end{align*}
for all $y \in [-5, 5]$ and $r > 0$,
i.e.,
    \begin{align*}
        0\leq 16ry^2+9ruy-25ru^2+&ruy-ru^2+uy-u^2+ux-xy\\
         = &16ry^2+(10ru+u-x)y-26ru^2-u^2+ux.
    \end{align*}
Let $a=16r$, $b = 10ru+u-x$ and $c = -26ru^2-u^2+ux$. Then, it is implied that $\triangle = b^2 -4ac \leq 0$, i.e.,
\begin{align*}
  0\geq (10ru+u-x)^2-64r(-26&ru^2-u^2+ux)\\
  = &1764r^2u^2+84ru^2+u^2-84rux-2ux+x^2\\
  =&((42r+1)u-x)^2.
\end{align*}
It follows that $u=\frac{x}{42r+1}$. It is concluded from Lemma \ref{2.7} that $K_r$ is single valued. Hence, $K_rx=\frac{x}{42r+1}$. Now by applying in theorem \ref{3.2}, it is implied that $u_n=\frac{x_n}{42r_n+1}$ where $\{x_n\}$ is a sequence generated by the algorithm \eqref{algo2}. Since
$F(K_{r_n})=\{0\}$, from condition $(3)$ of Lemma \ref{2.7}, we have $GEP(\tilde{F}, I)=\{0\}$.\\
Obviously,  $F(f)=\{ 0 \}$ and $\phi(0, f(x))\leq \phi(0, x)$, for all $x\in C$.
Now, let $x_n \rightharpoonup q$ and also $\lim_{n \to \infty} (f(x_n)-x_n)=0$, hence $q=0$ and $\hat{F(f)}=\{0\}=F(f)$. Therefore, $f$ is a relatively
nonexpansive mapping. Moreover, it is obvious that $0\in VI(C, I)$. Therefore, $0=\Pi_{\{0\}}o f(0) = \Pi_{VI(C, I)\cap GEP(\tilde{F}, I)}o f(0)$.

Next, assume that $\alpha_{n,1}=\frac{1}{4}+\frac{1}{4n}, \alpha_{n,2}=\frac{1}{4}-\frac{1}{6n}, \alpha_{n,3}=\frac{1}{4}+\frac{1}{12n}, \alpha_{n,4}=\frac{1}{4}-\frac{1}{6n}$, for all $n \in \mathbb{N}$ and $u_0=0$, so clearly
$\alpha_n$, $\beta_n$ and $\gamma_n$ satisfy in the conditions of Theorem \ref{3.2}.
Since $x_n\in C$, we have
    \begin{equation*}\label{algo3}
      \left\{
       \begin{array}{lr}
       w_n=\Pi_CJ^{-1}(u_n-\frac{1}{n}u_n)=\frac{n-1}{n}u_n=\frac{n-1}{2n}x_n, \\
       y_n=\Pi_CJ^{-1}(x_n-\frac{1}{n}x_n)=\Pi_C\frac{n-1}{n}x_n=\frac{n-1}{n}x_n,  \\
       C_n=\{v\in C:|v-w_n |\leq |v - x_n |\},\\
       z_n=\Pi_{C_n}J^{-1}(y_n-\frac{1}{n}y_n)= \frac{n-1}{n}y_n=(\frac{n-1}{n})^2x_n,\\
       x_{n+1}=\Pi_CJ^{-1}((\frac{1}{4}+\frac{1}{4n})x_n+(\frac{1}{4}-\frac{1}{6n})\frac{1}{3}x_n+(\frac{1}{4}+\frac{1}{12n})(\frac{n-1}{n})^2
       x_n\\
       \hspace{1.1cm}+(\frac{1}{4}-\frac{1}{6n})\frac{n-1}{2n}x_n).
     \end{array} \right.
    \end{equation*}
 \end{example}
See the table \ref{tableexample1} and Figure \ref{pp1} with the initial point $x_1=5$ of the sequence $\{x_n\}$.

    %


%
\section*{Declarations}
Not applicable.
\section*{Funding}
No funding is applicable to this article.
\section*{Conflict of interest}
The authors declare that they have no conflict of interest.
\section*{Authors' contributions}
The two authors equally contributed, read, and approved the final manuscript.
\section*{Competing interests}
The authors declare that they have no competing interests.
\section*{Acknowledgements}
The authors would like to thank the referees for their esteemed comments and suggestions.




\end{document}